\numberwithin{equation}{section}
\theoremstyle{definition}
\newtheorem{defn}[equation]{Definition}
\newtheorem{rmk}[equation]{Remark}
\theoremstyle{plain}
\newtheorem{thm}[equation]{Theorem}
\newtheorem{prop}[equation]{Proposition}
\newtheorem{lem}[equation]{Lemma}
\newtheorem{cor}[equation]{Corollary}
\renewcommand{\c}{\ensuremath{\mathcal{C}}}
\renewcommand{\d}{\mathrm d}
\newcommand{\E}{\ensuremath{\mathbb{E}}}
\newcommand{\eqdist}{\,\buildrel \rm d \over =\,}
\newcommand{\g}{\ensuremath{\mathfrak{G}}}
\newcommand{\map}{\ensuremath{\longrightarrow}}
\newcommand{\N}{\ensuremath{\mathbb{N}}}
\newcommand{\R}{\ensuremath{\mathbb{R}}}
\renewcommand{\S}{\ensuremath{\mathcal{S}}}
\newcommand{\ul}[1]{\underline{#1}}
\newcommand{\wh}[1]{\widehat{#1}}
\newcommand{\wt}[1]{\widetilde{#1}}
\newcommand{\abs}[1]{\ensuremath{\left|#1\right|}}
\newcommand{\inv}[1]{\ensuremath{\frac{1}{#1}}}
\newcommand{\uind}[2]{\ensuremath{#1^{(#2)}}}
\newcommand{\ts}[1]{\textsc{#1}}
\newcommand{\Om}{\bm\Omega}
\renewcommand{\g}{\ensuremath{\mathcal{L}}}
\newcommand{\regular}{regular }
\title{Product-form invariant measures for Brownian motion with drift satisfying a skew-symmetry type condition}
\author{Neil O'Connell and Janosch Ortmann\\ Warwick Mathematics Institute}
\begin{document}
\maketitle
\fancyhead[R]{\nouppercase{\leftmark}}
\fancyhead[L]{}
\bibliographystyle{acm}
\bibstyle{plain}

\begin{abstract}
	Motivated by recent developments on random polymer models we propose a generalisation of reflected Brownian motion (RBM) in a polyhedral domain. This process is obtained by replacing the singular drift on the boundary by a continuous one which depends, via a potential $U$, on the position of the process relative to the domain. It was shown by Harrison and Williams (1987) that RBM in a polyhedral domain has
an invariant measure in product form if a certain skew-symmetry condition holds.  We show that (modulo technical assumptions)
the generalised RBM has an invariant measure in product form if (and essentially only if) the same skew-symmetry condition holds, independent of the 
choice of potential.  The invariant measure of course does depend on the potential.  Examples include TASEP-like particle systems, generalisations of Brownian motion with rank-dependent drift and diffusions connected to the generalised Pitman transform.
\end{abstract}

\section{Introduction}

There has been much recent development on the study of random polymer models in $1+1$ dimensions
\cite{Sep09, CO'CSZ11, OConnellWarren, OConnellTodaLattice, SepValko10, BorodinCorwin11}. 
These turn out to be closely related to random matrix theory on the one hand and the study of the \emph{KPZ equation} \cite{KPZ}, which was proposed to describe a class of surface growth models, on the other.  An important role is played by an exactly solvable semi-discrete directed polymer model which was introduced in \cite{OConnellYor01} and further studied in \cite{MoriartyOConnell07,SepValko10, OConnellTodaLattice, BorodinCorwin11, Spohn12}. This polymer model can also be viewed as a network of {\em generalised Brownian queues} in tandem (or, equivalently, a TASEP-like interacting particle system).  A important role is played by an analogue of the \emph{output} or \emph{Burke theorem}, which leads to a product-form invariant measure for the series of queues.

Queueing networks \cite{Harrison, Harrison78, Reiman84, Zelicourt81} provide examples of \emph{reflected Brownian motion (RBM)} in a polyhedral domain, introduced and studied by \ts{Harrison} and \ts{Williams} \cite{HW87, Williams87}. The analogue of the output theorem in this more general setting is the existence of an invariant measure in product form. The main result in \cite{Williams87} is that RBM in a polyhedral domain has an invariant measure in product form if and only if a certain \emph{skew-symmetry condition} holds.

Motivated by these facts, we introduce a multidimensional diffusion we call \emph{generalised RBM (GRBM)}. Rather than giving the Brownian motion a singular drift whenever it hits one of the boundaries, we now impose a continuous drift. Its magnitude depends, via a potential $U$, on the position of the process relative to the polyhedral domain. A special case, the \emph{exponentially RBM}, is given by the choice $U(x)=-e^{-x}$, which corresponds to the generalised Brownian queue.
We show that for the GRBM existence of an invariant measure in product form is still equivalent to the skew-symmetry condition of \ts{Harrison--Williams}, irrespective of the function $U$.

By introducing a parameter $\beta$ (which can be viewed as \emph{inverse temperature}), taking $U(x)=-\beta^{-1}e^{-\beta x}$ and letting $\beta\longrightarrow\infty$, we recover the RBM studied by \ts{Harrison--Williams}. In this sense the GRBM process really is a generalisation of reflected Brownian motion. 

An example of GRBM is given by an analogue of the totally asymmetric simple exclusion process (TASEP) with a fixed number of particles.
In the exponential case this corresponds to the above-mentioned generalised Brownian queues in tandem. The existence of an invariant measure in product form allows us to compute the asymptotic speed from equilibrium for this particle system, as well as the speed of the system when started from `$-\infty$' in reverse order (the analogue of the so-called `step initial condition'). 
In the exponential case this particle system can be viewed as a projection of a higher dimensional diffusion on generalised 
Gelfand-Tsetlin patterns which was introduced in \cite{OConnellTodaLattice} and shown to be closely related to the quantum 
Toda lattice.  This higher dimensional diffusion in fact also fits into the framework of GRBM~\cite{OConnell_Whittaker}.

We also consider generalisations of \emph{Brownian motion with rank-dependent drift}, studied by \ts{Pal--Pitman} \cite{PalPitman08} 
and draw connections to the \emph{$\delta$-Bose gas} recently studied and related to reflected Brownian motion in a Weyl chamber 
by \ts{Prolhac--Spohn} \cite{ProlhacSpohn11}.

This paper is organised as follows. In Section~\ref{Sec:PolyDom} we review reflected Brownian motion in a polyhedral domain. Section~\ref{Sec:Analytic} is devoted to an analytic result that corresponds to the basic adjoint relations. The GRBM process is introduced in Section~\ref{Sec:GRBM}, where we also state the main probabilistic results. In Section~\ref{Sec:Examples} we discuss particular cases and examples. Section~\ref{Sec:Proofs} contains proofs.

\section{Reflected Processes in a Polyhedral Domain}
\label{Sec:PolyDom}

We recall here the definition and main properties of the reflected Brownian motion in a polyhedral domain and define its exponential analogue. The former were first introduced and studied by \ts{Harrison} and \ts{Williams} \cite{HW87, Williams87}. A related process was studied even earlier by \ts{Harrison--Reiman}\cite{HarrisonReiman81}, where the domain is always given by an orthant, but the driving Brownian motion is allowed to have a general covariance. Our generalisations cover both settings.

\subsection{RBM in a Polyhedral Domain}

Let us first discuss RBM in a general domain. In the \ts{Harrison--Williams} setting the \emph{polyhedral domain} $G\subseteq \R^d$ in which the process runs is the intersection of $k\geq d$ half-spaces. More precisely let $n_1,\ldots, n_k\in\R^d$ be unit vectors and $b\in\R^k$ then the domain $G$ is defined by
\begin{align*}
	G &= \bigcap_{j=1}^k G_j:= \bigcap_{j=1}^k \left\{x\in\R^d\colon n_j\cdot x_j\geq b_j \right\}.
\intertext{We assume that $G$ is non-empty and that each of the \emph{faces}}
	F_j &= \left\{x\in\overline{G}\colon n_j\cdot x =b_j \right\}
\end{align*}
has dimension $d-1$. In general $G$ may be bounded or unbounded, but we assume that $\{n_1,\ldots,n_k\}$ spans $\R^d$, which means that no line can lie entirely within $\overline{G}$.

The reflections are defined by vectors $q_1,\ldots, q_k\in\R^d$ such that $q_j\cdot n_j=0$ for all $j$. We denote by $N$ and $Q$ the $k\times d$ matrices whose $j^\text{th}$ rows are $n_j$ and $q_j$ respectively. The requirement that $\{n_1,\ldots,n_k\}$ spans all of $\R^d$ is equivalent to existence of an invertible $d\times d$ submatrix $\overline{N}$ of $N$.
 
		\begin{figure}[H]
		\begin{center}
		\includegraphics[scale = .2]{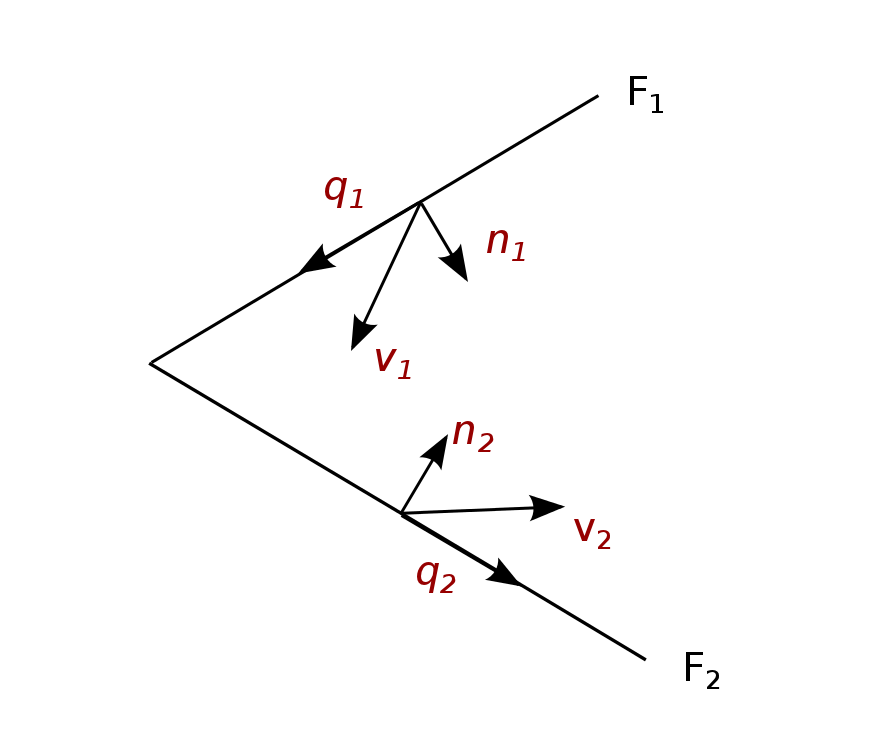}
		\hspace{2cm}
		\caption{Reflection vectors in a wedge}
	\end{center}
		\end{figure}

\par\noindent Informally, reflected Brownian motion $\omega$ in $G$ may be described as follows: inside the domain $G$ the process $\omega$ behaves like a standard Brownian motion with drift $-\mu\in\R^d$, at the boundary it receives an instantaneous singular drift pointing towards the interior -- in direction $v_j:= q_j+n_j$ whenever it hits the face $F_j$ -- and it almost surely never hits any point in the intersection of two or more faces.

In general such a process may not exist. The boundary of the state space is not smooth, and the directions of reflection are discontinuous across non-smooth parts of the boundary, so this does not fit within the Stroock--Varadhan theory \cite{StroockVaradhan71} of multidimensional diffusions. \ts{Williams} \cite{Williams87} showed that if the input data satisfy the \emph{skew-symmetry condition}
\begin{align}
	\label{Eq:HWSkewSymm}
	n_j\cdot q_r + n_r\cdot q_j &=0 \quad\quad \forall j,r\in \ul{k}
\end{align}
then there exists a reflected Brownian motion which can be defined as the unique solution to a submartingale problem. It was also shown in \cite{Williams87} that, under the skew-symmetry condition, reflected Brownian motion has an an invariant measure in product form:

% More formally we can give a set of stochastic differential equation satisfied by the process $\omega$ started at $x\in\R^d$:
% \begin{align}
% 	\label{Eq:RBMSDE}
% 	\omega(t) & = x + \uind{B}{-\mu}(t) + \sum_{r=1}^k v_r \int_0^t \one_{\{n_r\cdot w(s) = b_j \}}\, \d\omega(s)
% \end{align}
% where $\uind{B}{-\mu}$ is a $d$-dimensional Brownian motion with drift $-\mu$. This  (because the boundary is not smooth), but \ts{Harrison--Williams} solve this using a submartingale problem approach.

% In \cite{HW87} it was shown that there is a unique continuous adapted solution to (\ref{Eq:RBMSDE}). \ts{Williams} \cite{Williams87} proved, based on analytic work in \cite{HW87}, that the process has  if and only if a skew-symmetry condition holds. We summarise the relevant results of \cite{HW87, Williams87} as follows:

\begin{thm}
	\label{Thm:MainWilliams}
	Suppose that the skew-symmetry condition (\ref{Eq:HWSkewSymm}) holds, then RBM corresponding to $(N,Q,\mu,b)$ has a unique invariant measure whose density with respect to Lebesgue measure is given by
	\begin{align}
		\pi(x) & = \exp\left\{-2\gamma(\mu)\cdot x\right\}
		\intertext{where $\gamma(\mu)$ is defined as follows. By assumption, $N$ has an invertible $d\times d$ submatrix $\overline{N}$. Denote the corresponding submatrix of $Q$ by $\overline{Q}$, then}
		\label{Eq:DefGamma}
		\gamma(\mu) & = \left(I-\overline{N}^{-1} \overline{Q} \right)^{-1}\mu.
	\end{align}
\end{thm}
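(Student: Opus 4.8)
The plan is to characterise invariant measures of RBM through the \emph{basic adjoint relation} (BAR) established in Section~\ref{Sec:Analytic}, and then to verify that the proposed density, together with a natural family of boundary measures, satisfies it. Inside $G$ the process is Brownian motion with drift $-\mu$, so its generator is $\g=\tfrac12\Delta-\mu\cdot\nabla$, while reflection off the face $F_j$ is encoded by the first-order boundary operator $D_j=v_j\cdot\nabla=(q_j+n_j)\cdot\nabla$. Thus a measure $\pi$ on $G$ with boundary measures $\sigma_j$ on the $F_j$ is invariant precisely when
\begin{align*}
	\int_G \g f\,d\pi + \sum_{j=1}^k\int_{F_j}D_j f\,d\sigma_j = 0
\end{align*}
for every suitable test function $f$. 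I would substitute the ansatz $\pi(x)=\exp\{-2\gamma\cdot x\}$, with $\gamma\in\R^d$ to be determined, together with boundary measures $d\sigma_j = c_j\,\pi\,dS$ proportional to surface measure on each face.

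The first step is to integrate the interior term by parts via Green's identity, using $\nabla\pi=-2\gamma\,\pi$ and that the outward normal to $F_j$ is $-n_j$. This produces an interior integral against $\g^*\pi=2(|\gamma|^2-\mu\cdot\gamma)\pi$ together with boundary integrals over each $F_j$ involving both the normal derivative $n_j\cdot\nabla f$ and the value $f$. Collecting the coefficients of $n_j\cdot\nabla f$ --- which come from Green's identity and from the normal component of $v_j$ in $D_j$ --- forces $c_j=\tfrac12$ for every $j$. The remaining boundary contribution contains the tangential derivative $q_j\cdot\nabla f$ (tangential because $q_j\cdot n_j=0$); I would remove it by integrating by parts \emph{along the face} $F_j$, turning it into a multiple of $f$ through $q_j\cdot\nabla\pi=-2(q_j\cdot\gamma)\pi$. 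The lower-dimensional corner terms produced here are controlled by the standing assumption that the process almost surely avoids intersections of two or more faces.

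After these manipulations the BAR reduces, for arbitrary $f$, to one scalar \emph{interior} condition and $k$ linear \emph{boundary} conditions:
\begin{align*}
	|\gamma|^2 = \mu\cdot\gamma, \qquad (n_j-q_j)\cdot\gamma = n_j\cdot\mu \quad (1\le j\le k).
\end{align*}
Selecting the $d$ boundary equations indexed by the invertible submatrix $\overline N$ and solving yields exactly $\gamma=(I-\overline N^{-1}\overline Q)^{-1}\mu$, as in \bref{Eq:DefGamma}, provided $\overline N-\overline Q$ is invertible (part of the standing well-posedness hypotheses). The crux is then to show that this single $\gamma$ also satisfies the remaining $k-d$ boundary equations and the interior equation, and this is exactly where the skew-symmetry condition \bref{Eq:HWSkewSymm} enters.

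For consistency of the boundary system I would expand each $n_j=\sum_{r}\alpha_r n_r$ in the basis furnished by $\overline N$; using $n_j\cdot(\gamma-\mu)=q_j\cdot\gamma$ on the basis rows together with the identity $q_j\cdot n_s=-q_s\cdot n_j=\sum_r\alpha_r\,q_r\cdot n_s$, which is \bref{Eq:HWSkewSymm} applied twice, gives $q_j=\sum_r\alpha_r q_r$ and hence that equation $j$ holds for every $j$. For the interior condition I would write $\gamma=\sum_r a_r n_r$ and invoke the basis equations to get $\gamma\cdot(\gamma-\mu)=\sum_{r,s}a_r a_s\,(q_s\cdot n_r)$, which vanishes because skew-symmetry makes the matrix $(q_s\cdot n_r)$ antisymmetric. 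This shows that $\pi(x)=\exp\{-2\gamma\cdot x\}$ with $\sigma_j=\tfrac12\pi|_{F_j}$ solves the BAR, so $\pi$ is invariant; uniqueness follows from the well-posedness of the submartingale problem and the resulting ergodicity, as in \cite{Williams87}. I expect the main obstacle to be analytic rather than algebraic: justifying the integrations by parts (integrability and decay of $f$ and $\pi$, and the treatment of the non-smooth corners where faces meet), whereas the algebra driven by skew-symmetry is short and clean.
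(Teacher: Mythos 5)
There is no in-paper proof to compare against: Theorem~\ref{Thm:MainWilliams} is quoted from \cite{Williams87} (see also \cite{HW87}) and the paper's Section~\ref{Sec:Proofs} only proves Theorems~\ref{Thm:Analytic} and~\ref{Thm:AnalyticOrthant} and Proposition~\ref{prop:AssumptionU}. That said, your computation is the right one and is essentially the classical Harrison--Williams argument. The bookkeeping checks out: Green's identity with outward normal $-n_j$ forces $c_j=\tfrac12$, the face-wise conditions $(n_j-q_j)\cdot\gamma=n_j\cdot\mu$ give $\gamma=(I-\overline N^{-1}\overline Q)^{-1}\mu$ from the rows of $\overline N$, your expansion argument showing $q_j=\sum_r\alpha_r q_r$ whenever $n_j=\sum_r\alpha_r n_r$ is exactly the fact recorded in Remark~\ref{Rmk:Gamma} (independence of $\gamma$ from the choice of $\overline N$), and the antisymmetry of $(q_r\cdot n_s)$ yields $|\gamma|^2=\gamma\cdot\mu$, again as in Remark~\ref{Rmk:Gamma}. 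This is also consistent in spirit with the paper, which explicitly presents $\g^*p=0$ in Section~\ref{Sec:Analytic} as the analogue of the basic adjoint relations.

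The genuine gap is that everything rests on the unproved assertion that a pair $(\pi,\{\sigma_j\})$ satisfying your integral identity \emph{characterises} invariant measures of the RBM. That equivalence, together with the existence of the process itself (as the unique solution of a submartingale problem) and the uniqueness of the invariant measure, is the actual analytic content of \cite{Williams87}; it cannot be dispatched by ``justifying the integrations by parts,'' since the difficulty is not the decay of $f$ and $\pi$ but the fact that the generator and boundary operators do not a priori determine the semigroup near the non-smooth parts of $\partial G$ (this is precisely why the Stroock--Varadhan theory does not apply, as the paper notes). In \cite{Williams87} sufficiency is not obtained by abstractly verifying the BAR but by an explicit construction exploiting skew-symmetry (a time-reversal/duality argument), and uniqueness requires more than the word ``ergodicity.'' So your proposal correctly reconstructs the algebraic heart of the cited proof, but the parts you defer are the parts that constitute the theorem.
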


\begin{rmk}
 	\label{Rmk:Gamma}
 	The existence of an invertible submatrix $\overline{N}$ of $N$ was assumed. By the remarks after equation (1.7) in \cite{HW87} (p. 463) the matrix $(I-\overline{N}^{-1}\overline{Q})$, and hence $\gamma(\mu)$, is independent of the choice of $\overline{N}$, provided the skew-symmetry condition (\ref{Eq:SkewSymm}) holds. Further \cite[(4.7), (7.13)]{HW87} we have $\abs{\gamma(\mu)}^2 = \gamma\cdot \mu$.
 \end{rmk}

\ts{Harrison--Williams} \cite{HW87} also consider reflected Brownian motion in a smooth domain and establish similar results in that setting.

\begin{rmk}
	In two dimensions, the skew-symmetry condition corresponds to a constant angle of reflection across the entire boundary. To be more precise (cf. \cite{HW87}, Example 8.1), fix an orientation on $\partial G$ (which, in two dimensions, is always connected), and let $\tau_j$ be the unit tangent to the face $F_j$ pointing in the positive direction with respect to this orientation. Define the angles $\theta_j\in \left(-\frac{\pi}{2},\frac{\pi}{2}\right)$ by $q_j=\tau_j\,\tan\theta_j$. Then the skew-symmetry condition is equivalent to the requirement that $\theta_j=\theta_r$ for any $r,j$.
	
		\begin{figure}[H]
		\begin{center}
		\includegraphics[scale = .25]{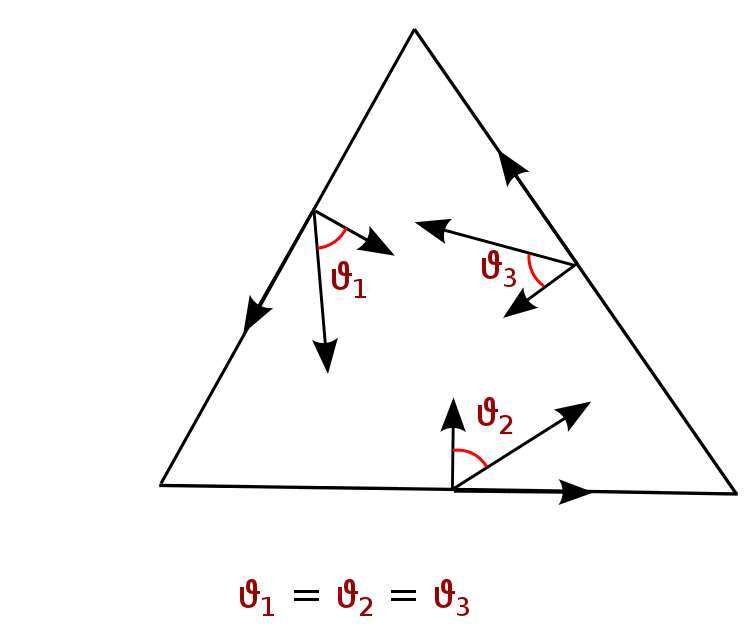}
		\hspace{4cm}
		\caption{The skew-symmetry condition in two dimensions}
	\end{center}
		\end{figure}
\end{rmk}

%%%COMMENTED OUT: INTRODUCTION TO DIEKER--MORIARTY RESULTS-		
	
% 	 \ts{Dieker--Moriarty} \cite{DiekerMoriarty09} have studied reflected Brownian motion in a wedge under more general assumptions than the skew-symmetry condition. Let us denote by $\xi,\delta,\epsilon$ the angles involved according to Figure~\ref{Fig:WedgeDM}
% 	
% 		\begin{figure}[H]
% 			\label{Fig:WedgeDM}
% 		\begin{center}
% 		\includegraphics[scale = .25]{wedge_DM.png}
% 		\hspace{4cm}
% 		\caption{}
% 	\end{center}
% 		\end{figure}
% 	
% 	
% 	\par\noindent and, following \cite{DiekerMoriarty09} define the number $\alpha$ by
% 	\begin{align*}
% 		\alpha & = \frac{\delta+\epsilon-\pi}{\xi}.
% 	\end{align*}
% 	Under the assumption $\alpha<1$ there always exists a reflected Brownian motion, and the skew-symmetry condition is equivalent to $\alpha=0$. For $\mu\in G^\circ$ was proved in \cite{DiekerMoriarty09} that the process has an invariant measure whose density can be written as a finite sum of terms of exponential product form, that is of the form 
% 	\begin{align*}
% 		\sum_{j=1}^k a_j\,e^{-\lambda_j\cdot x}	
% 	\end{align*}
% 	(for some $a_j\in\R$ and $\lambda_j\in\R^2$) if and only if $\alpha$ is a non-positive integer.

\subsection{RBM in an Orthant}
\label{Sec:RBMOrthant}

We now turn to reflected Brownian motion in the orthant $S=(0,\infty)^d$, driven by a general-covariance Brownian motion. Our definition almost exactly mirrors that of \ts{Harrison--Reiman}  \cite{HarrisonReiman81}. However, we have changed the sign of the reflection matrix $Q$ to make it compatible with the \ts{Harrison--Williams} setup.

Let $d\in\N$ and $B$ be a $d$-dimensional Brownian motion with drift $\mu$ and covariance matrix $A=\sigma\sigma^T$, started inside $S$. That is, there exists a $k$-dimensional standard Brownian motion $\beta$ and a $k\times d$ matrix $\sigma$ with unit rows such that $B(t)=\sigma\beta(t)-\mu t$. In \cite{HarrisonReiman81} additional assumptions on the matrix $Q$ are required, namely that the $d\times d$ matrix $Q$ has non-positive entries, spectral radius less than one and zeroes on the diagonals. \ts{Harrison--Reiman} prove that under these assumptions there exists a unique pair of continuous $\R^d$-valued processes $(Y,Z)$ with
\begin{align*}
	Z(t) & = Y(t) + B(t)(I+Q)
\end{align*}
and such that
\begin{enumerate}[(i)]
	\item $Z(t)\in S$ for all $t\geq 0$
	\item for each $j\in\ul{d}$ the real-valued process $Y_j$ is continuous, non-decreasing and such that $Y_j(0)=0$
	\item $Y_j$ only increases at such times $t$ where $Z_j(t)=0$.
\end{enumerate}
The process $Z$ is called \emph{reflected Brownian motion} in the orthant $S$ with respect to the matrix $Q$, driven by $B$.

If $k=d$ and $\sigma$ is an invertible matrix then it is easy to see that the process $\sigma^{-1}(Z)$ is a reflected Brownian motion in the polyhedral domain $\sigma^{-1}(S)$ in the sense of \ts{Harrison--Williams}.

\section{An Analytic Result}
\label{Sec:Analytic}

Before introducting the GRBM process in the next section we state a purely analytic result. We continue to use the same notation and assumptions for the polyhedral domain $G$ and the normal and reflection matrices $N$ and $Q$. As mentioned in the introduction, the idea is to replace the singular reflection drift by a continuous drift that depends, via a function $U$ on the position of the process relative to the domain $G$. This corresponds to defining a differential operator $\g$ by 
\begin{align}
 	\label{Eq:DefGenerator}
 	\g & = \inv{2}\,\Delta - \mu\cdot\nabla +\sum_{r=1}^k U'\left(n_r\cdot x-b_r\right) v_r\cdot \nabla
\end{align}
and defining the GRBM process as diffusion with generator $\g$. Let now $\nu$ be a measure on $\R^d$ which is absolutely continuous with respect to Lebesgue measure and has density $p$ and such that $\g^* p=0$, where $\g^*$ denotes the formal adjoint of $\g$. That is, $\g^*=\inv{2}\,\Delta-\Om\cdot\nabla - \nabla\cdot\Om$ where
\begin{align*}
		\Om( x) & = \sum_{r=1}^k U'\left(n_r\cdot x- b_r\right)v_r - \mu
\end{align*}
If the operator $\g$ is sufficiently well-behaved (which will depend on the function $U$) this would imply that the measure $\nu$ is invariant with respect to the GRBM process. We begin by the following analytic result, which states that under the skew-symmetry condition (\ref{Eq:HWSkewSymm}) of \ts{Harrison--Williams}, the condition $\g^*p=0$ holds for a particular density function that can be considered the analogue of the function $\pi$ from Section~\ref{Sec:PolyDom}.

We remark that the result does not require $U$ to be anything more than twice continuously differentiable. In the next section we will discuss further conditions on $U$ that ensure a probabilistic interpretation. The partial differential equation $\g^*f=0$ can be viewed as the analogue of the \emph{basic adjoint relations} (BAR) of \cite{HW87, Williams87}.

\begin{thm}
	\label{Thm:Analytic}
	Suppose that the $n_j, q_j$ satisfy the skew-symmetry condition (\ref{Eq:HWSkewSymm}), let $U\colon\R\map\R$ be any twice continuously differentiable function and define the function $p\colon \R^d\map \R$ by
	\begin{align}
		\label{Eq:IMHW}
		p_U(x) & = \exp\left\{2\left(\sum_{r=1}^k U\left(n_r\cdot x - b_r\right)-\gamma(\mu)\cdot x \right) \right\}\, \d x
	\end{align}
	where $\gamma(\mu)$ is as in (\ref{Eq:DefGamma}). Then $\g^*p=0$.
\end{thm}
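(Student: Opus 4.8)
The plan is to use that $p_U$ is strictly positive and to reduce the identity $\g^* p_U = 0$ to a pointwise scalar equation. Abbreviate $\gamma := \gamma(\mu)$, set $u_r := n_r\cdot x - b_r$ (so $\nabla u_r = n_r$), and write $p_U = e^{2\phi}$ with $\phi(x) = \sum_{r=1}^k U(u_r) - \gamma\cdot x$. First I would record the elementary derivatives $\nabla\phi = \sum_r U'(u_r)\,n_r - \gamma$ and $\Delta\phi = \sum_r U''(u_r)$, the latter using $\abs{n_r}=1$; these give $\nabla p_U = 2(\nabla\phi)\,p_U$ and $\Delta p_U = \bigl(2\Delta\phi + 4\abs{\nabla\phi}^2\bigr)p_U$.

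The second ingredient is the divergence of the adjoint drift $\Om$. Since $v_r = q_r + n_r$ with $q_r\cdot n_r = 0$ and $\abs{n_r}=1$, one has $\nabla\cdot\Om = \sum_r U''(u_r)\,n_r\cdot v_r = \sum_r U''(u_r) = \Delta\phi$. Substituting into $\g^* p_U = \inv{2}\Delta p_U - \Om\cdot\nabla p_U - (\nabla\cdot\Om)\,p_U$, the two occurrences of $\Delta\phi$ cancel, and dividing through by the positive factor $p_U$ collapses the whole equation to the single scalar identity $\nabla\phi\cdot(\nabla\phi - \Om) = 0$, which is what remains to be verified.

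Next I would compute $\nabla\phi - \Om$ explicitly. The terms $U'(u_r)\,n_r$ in $\nabla\phi$ and $U'(u_r)(q_r+n_r)$ in $\Om$ differ precisely by $U'(u_r)\,q_r$, so $\nabla\phi - \Om = \mu - \gamma - \sum_r U'(u_r)\,q_r$. Writing $a_r := U'(u_r)$ and expanding $\nabla\phi\cdot(\nabla\phi - \Om)$ splits it into three groups: a quadratic form $-\sum_{r,s}a_r a_s\,n_r\cdot q_s$, a constant $\abs{\gamma}^2 - \gamma\cdot\mu$, and a linear part $\sum_r a_r\bigl[n_r\cdot(\mu-\gamma) + q_r\cdot\gamma\bigr]$. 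The quadratic form vanishes on symmetrising the coefficients and invoking the skew-symmetry condition \bref{Eq:HWSkewSymm}, which gives $n_r\cdot q_s + n_s\cdot q_r = 0$; the constant vanishes by the identity $\abs{\gamma}^2 = \gamma\cdot\mu$ recorded in Remark \ref{Rmk:Gamma}.

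The main obstacle is the linear part: I must show $n_r\cdot(\mu-\gamma) + q_r\cdot\gamma = 0$, equivalently $(n_r - q_r)\cdot\gamma = n_r\cdot\mu$, for \emph{every} $r\in\ul{k}$ and not merely for the $d$ indices of one invertible submatrix. For those $d$ indices the relation is immediate from \bref{Eq:DefGamma}, since $(\overline{N}-\overline{Q})\gamma = \overline{N}\mu$ reads row-by-row as $(n_j-q_j)\cdot\gamma = n_j\cdot\mu$. To reach an arbitrary index $r_0$, I would use that $\{n_1,\dots,n_k\}$ spans $\R^d$ to complete the unit vector $n_{r_0}$ to a basis drawn from the $n_j$, producing an invertible submatrix $\overline{N}$ whose rows include $r_0$; the independence of $\gamma$ from the choice of $\overline{N}$ under the skew-symmetry condition (Remark \ref{Rmk:Gamma}) then gives the relation for $r_0$ with the same $\gamma$. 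With all three groups vanishing, $\nabla\phi\cdot(\nabla\phi-\Om)=0$ holds at every $x$, and hence $\g^* p_U = 0$.
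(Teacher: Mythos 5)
Your proof is correct and follows essentially the same route as the paper's: reduce $\g^*p_U=0$ to the scalar identity $\nabla\phi\cdot(\nabla\phi-\Om)=0$, kill the quadratic part by symmetrising and invoking skew-symmetry, the constant part via $\abs{\gamma}^2=\gamma\cdot\mu$, and the linear part by showing $(n_r-q_r)\cdot\gamma=n_r\cdot\mu$ for every $r$ using the independence of $\gamma$ from the choice of invertible submatrix. Your basis-extension step for reaching an arbitrary row $r_0$ is exactly the content of the paper's Lemma~\ref{Thm:LinAlg}.
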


\par\noindent The proof of this result is deferred to Section~\ref{Sec:Proofs}. A similar result holds for the analogue of RBM in an orthant as defined by \ts{Harrison--Reiman} \cite{HarrisonReiman81} (see Section~\ref{Sec:RBMOrthant}). Here we allow the driving Brownian motion to have a more general covariance matrix, but require that $n_j=e_j$, where $e_1,\ldots,e_d$ denotes the standard orthonormal basis for $\R^d$. Therefore $N=I$, the $d\times d$ identity matrix.

\begin{thm}
	\label{Thm:AnalyticOrthant}
	Let $A=(a_{jk})_{j,k}$ be a symmetric positive definite matrix whose diagonal entries are $a_{jj}=1$ for any $j$. Suppose that $q_1,\ldots,q_d$ satisfy the \emph{modified skew-symmetry condition} 
	\begin{align}
		\label{Eq:SkewSymOrthant}
		 a_{rj}& =\frac{q_{jr} + q_{rj}}{2}\quad\quad\forall\, r\ne j.
		\intertext{Then we have $\g^*p(x)=0$ where $\g^*$ denotes the formal adjoint of the operator}
		% 		\label{Eq:DefGeneratorOrthant}
		\g & = \inv{2}\,\nabla\cdot\left(A\nabla\right) - \mu\cdot\nabla +\sum_{r=1}^k U'\left(x_r\right) \left(q_{r}+e_r \right)\cdot \nabla
		\intertext{and the density function $p$ is defined by}
		\label{Eq:DensityOrthant}
			p(x) &=  \exp\left\{2\left[ \sum_{j=1}^d U\left(x_j\right) - \left(2 A - I-Q\right)^{-1}\mu\cdot x \right]  \right\}.
	\end{align}
\end{thm}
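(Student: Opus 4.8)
The plan is to compute the formal adjoint $\g^*$ explicitly and then verify $\g^* p = 0$ as a pointwise algebraic identity, the whole computation being driven by a single matrix relation hidden in the modified skew-symmetry condition \bref{Eq:SkewSymOrthant}. Since $A$ is constant and symmetric, two integrations by parts give
\begin{align*}
\g^* f = \inv{2}\,\nabla\cdot(A\nabla f) - \Om\cdot\nabla f - (\nabla\cdot\Om)\,f, \qquad \Om(x) = -\mu + \sum_{r=1}^d U'(x_r)\,(q_r+e_r).
\end{align*}
Writing $p = e^{\phi}$ with $\phi(x) = 2\bigl[\sum_j U(x_j) - c\cdot x\bigr]$ and $c = (2A-I-Q)^{-1}\mu$, I would introduce the vector field $w(x)$ with components $w_i = U'(x_i) - c_i$, so that $\nabla\phi = 2w$ and $\nabla p = 2p\,w$. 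Because $\phi$ is a separable part plus a linear part, its Hessian is diagonal with entries $2U''(x_i)$, whence $\partial_i\partial_j p = p\,[\,4 w_i w_j + 2U''(x_i)\delta_{ij}\,]$.

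Feeding this into the second-order term and using $a_{jj}=1$ gives $\inv{2}\,\nabla\cdot(A\nabla p) = p\bigl[\,2\,w^{T}Aw + \sum_j U''(x_j)\,\bigr]$. For the zeroth-order term I would compute $\nabla\cdot\Om$: since $q_{jj}=q_j\cdot e_j = 0$, only the diagonal contributions survive and $\nabla\cdot\Om = \sum_j U''(x_j)$, which cancels exactly the Laplacian-generated $U''$ sum. Thus $\g^* p = 2p\,[\,w^{T}Aw - \Om\cdot w\,]$, and it remains to prove the pointwise identity $w^{T}Aw = \Om\cdot w$.

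To this end I would write $\Om = -\mu + (I+Q^{T})D$, where $D$ has entries $D_i = U'(x_i)$, so that $D = w + c$ and
\begin{align*}
\Om\cdot w = -\,w^{T}\mu + w^{T}(I+Q^{T})w + w^{T}(I+Q^{T})c.
\end{align*}
The modified skew-symmetry condition \bref{Eq:SkewSymOrthant}, together with $a_{jj}=1$ and $q_{jj}=0$, says precisely that $A - I = \inv{2}(Q+Q^{T})$. Since $w^{T}Q^{T}w = \inv{2}\,w^{T}(Q+Q^{T})w$, the quadratic term becomes $w^{T}(I+Q^{T})w = w^{T}w + w^{T}(A-I)w = w^{T}Aw$, which cancels the left-hand side. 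The same relation rearranges to $2A-I-Q = I+Q^{T}$, so that $(I+Q^{T})c = (2A-I-Q)c = \mu$ and the residual terms $-w^{T}\mu + w^{T}(I+Q^{T})c$ vanish identically. This establishes $w^{T}Aw = \Om\cdot w$ and hence $\g^* p = 0$.

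The computation is largely bookkeeping; the one point I would watch most carefully is keeping the non-identity covariance $A$ straight in both places it enters, namely the symmetrisation of the quadratic form and the definition of the correction vector $c$. Both are controlled by the single identity $A = I + \inv{2}(Q+Q^{T})$, which is exactly why \bref{Eq:SkewSymOrthant} is the right hypothesis: it simultaneously matches the quadratic form to $A$ and makes $2A-I-Q=I+Q^{T}$ invert the drift correctly. An alternative would be to reduce to the polyhedral Theorem~\ref{Thm:Analytic} via the change of variables $x=\sigma y$ of Section~\ref{Sec:RBMOrthant} (with $A=\sigma\sigma^{T}$), under which the potential $\sum_j U(x_j)$ does transform into polyhedral form since the rows of $\sigma$ are unit vectors; but tracking how the reflection data $Q$ and the skew-symmetry condition transform requires enough care that the direct route above seems preferable.
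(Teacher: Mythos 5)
Your proof is correct and follows essentially the same route as the paper's: a direct computation of $\g^{*}p$ for $p=e^{W}$, in which the quadratic terms in $U'$ cancel via the matrix identity $A-I=\inv{2}\left(Q+Q^{T}\right)$ encoded in \bref{Eq:SkewSymOrthant} (together with $a_{jj}=1$, $q_{jj}=0$) and the remaining linear and constant terms cancel by the definition of $(2A-I-Q)^{-1}\mu$. Your packaging via the vector $w=\inv{2}\,\nabla W$ and the explicit observation $2A-I-Q=I+Q^{T}$ is a somewhat cleaner organisation of the same cancellations that the paper carries out term by term.
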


\par\noindent We note that both the original and the modified skew-symmetry condition do not depend on the function $U$.

\section{Generalised Reflected Brownian Motion}
\label{Sec:GRBM}

\subsection{The GRBM process}

We now turn to the probabilistic interpretation of the results from Section~\ref{Sec:Analytic}. Let $B$ be a Brownian motion with drift $-\mu$ and covariance matrix $A=(a_{jk})_{j,k}=\sigma\sigma^T$, whose diagonal entries are $a_{jj}=\alpha>0$ for any $j$. The generator of $B$ is given by $\inv{2}\,\nabla\cdot\left(A\nabla\right)$. We always tacitly assume that either the covariance matrix $A=I$ (the \ts{Harrison--Williams} setting), or $d=k$ and $N=I$ (i.e. the domain is an orthant, generalising the \ts{Harrison--Reiman} RBM). We also assume that the \emph{generalised skew-symmetry condition}
\begin{align}
	\label{Eq:SkewSymm}
	n_j\cdot q_r& + n_r\cdot q_j = \frac{2a_{rj}}{\alpha}\quad\quad \forall\, r\ne j.
\end{align}
holds. Of course equation (\ref{Eq:SkewSymm}) reduces to (\ref{Eq:HWSkewSymm}) and (\ref{Eq:SkewSymOrthant}) in the Harrison--Williams and Harrison--Reiman cases respectively. In order to interpret Theorems~\ref{Thm:Analytic}~and~\ref{Thm:AnalyticOrthant} probabilistically we need the function $U$ to satisfy a certain regularity condition.

\begin{defn}
	A twice continuously differentiable function $U\colon\R\map\R$ is said to be \regular with respect to the input data $(N,Q,\mu,A)$ if
	\begin{enumerate}[(1)]
		\item the second-order differential operator $\g$ defined by
	\begin{align}
%	 	\label{Eq:DefGenerator}
	 	\g & = \inv{2}\,\nabla\cdot\left(A\nabla\right) - \mu\cdot\nabla +\sum_{r=1}^k U'\left(n_r\cdot x-b_r\right) v_r\cdot \nabla
	\end{align}
	is the infinitesimal generator of a diffusion process with continuous sample paths in $\R^d$
		\item if $\rho\colon\R^d\map [0,\infty)$ is smooth, integrable and such that $\g^*\rho=0$, then  $\rho(x)\,\d x$ is an invariant measure for this diffusion. Here $\g^*$ denotes the formal adjoint of $\g$.
	\end{enumerate}
\end{defn}

% \begin{assumption}
% 	\label{Assumption:U}
% 	The twice continuously differentiable function $U\colon\R\map\R$ is sufficiently regular to ensure that . Moreover, .
% \end{assumption}
% 

% and $U(x)$ tends to $-\infty$ sufficiently fast as $x\to-\infty$, remains bounded on the positive half-line, and $U'$ and $U''$ can be bounded by $U$. A precise statement is given in Proposition \ref{prop:AssumptionU} below. Note that in the case where $B$ is a standard Brownian motion, i.e. $\a=\inv{2}\,\Delta+\mu\cdot\nabla$, equation (\ref{Eq:SkewSymm}) reduces to (\ref{Eq:HWSkewSymm}).

\begin{defn}
	\label{Def:RBMGeneral}
	Suppose that the function $U$ is \regular with respect to the input data $(N,Q,\mu,A)$. Then the $\R^d$-valued diffusion with infinitesimal generator $\g$ is called \emph{generalised reflected Brownian motion (GRBM)} corresponding to the potential $U$ and the data $(N,Q,\mu,A)$. 
\end{defn}

%AT THE END: NEED TO MAKE SURE THAT WE ALWAYS ASSUME THE I.M.'S TO HAVE FINITE MASS!!
We now state a sufficient condition $U$ to be regular with respect to $(N,Q,\mu,A)$ where one of $N$ or $A$ are equal to the identity-covariance case. The proof is deferred to Section~\ref{Sec:Proofs}.

\begin{prop}
	\label{prop:AssumptionU}
	Suppose either $A=I$ or $k=d$ and $N=I$, that the $n_j, q_j$ are as above, in particular satisfying the skew-symmetry condition (\ref{Eq:SkewSymm}) and that $U$ satisfies the following conditions:
	 	\begin{enumerate}[(i)]
	 	 		\item $x-U(x)\longrightarrow \infty$ as $\abs{x}\longrightarrow \infty$
	 	 		\item For each $a\in\R$ there exists $\theta_a>0$ such that $U(x+a)-U(a)\leq \theta_a\, x$ for all $x\in\R$.
	 	 		\item There exist $\kappa,\alpha>0$ such that $\gamma_r U'(x)-\inv{2}\,U''(x)\leq \alpha \left(\theta_{-b_r} x - U(x)\right)+\kappa$ for all $x\in\R$ and $r\in\ul{k}$. Here, $\gamma_r = \sum_{s=1}^k \theta_{-b_s} \left(v_r\cdot n_s\right)+\mu\cdot n_r$.
	 	 	\end{enumerate}
	Then $U$ is regular with respect to $(N,Q,\mu,A)$.
\end{prop}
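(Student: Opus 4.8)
The plan is to verify the two defining clauses of regularity: (1) that $\g$ is the infinitesimal generator of a continuous, non-exploding diffusion on $\R^d$, and (2) that every smooth, integrable $\rho\geq 0$ with $\g^*\rho=0$ yields an invariant measure $\rho(x)\,\d x$. Clause (2) is soft once (1) is in hand, so the real work is the non-explosion estimate underlying clause (1), and conditions (i)--(iii) are exactly what make that estimate go through.

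For (1) I would first record local well-posedness. Writing $y_r=n_r\cdot x-b_r$ and $v_r=q_r+n_r$, the drift $b(x)=-\mu+\sum_{r=1}^k U'(y_r)\,v_r$ is locally Lipschitz because $U\in C^2$, and the diffusion matrix $A$ is constant and positive definite; hence the driving SDE has a unique strong solution and the martingale problem for $\g$ is well posed up to an explosion time $\zeta$. Everything then reduces to showing $\zeta=\infty$ almost surely, which I would do with the Lyapunov function
\[
 V(x)=\sum_{r=1}^k\Big(\theta_{-b_r}\,(n_r\cdot x)-U(y_r)\Big).
\]
Condition (ii) with $a=-b_r$ gives $\theta_{-b_r}(n_r\cdot x)-U(y_r)\geq -U(-b_r)$, so $V$ is bounded below (and may be shifted to be nonnegative), while condition (i), together with the fact that $\{n_1,\dots,n_k\}$ spans $\R^d$, forces $V(x)\to\infty$ as $\abs{x}\to\infty$, so that $V$ is proper.

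The crux is the drift inequality $\g V\leq cV+c'$. Computing $\g V$, the second-order part contributes $-\tfrac12\sum_r U''(y_r)$ (since $n_r^{T}A\,n_r=1$ in both admissible cases: $A=I$ with unit $n_r$, and $N=I$ with $a_{rr}=1$), while the first-order parts $-\mu\cdot\nabla$ and $\sum_s U'(y_s)\,v_s\cdot\nabla$ combine---by the very definition of $\gamma_r$---into the linear term $\sum_r\gamma_r U'(y_r)$, an additive constant, and the quadratic term $-\sum_{s,r}U'(y_s)U'(y_r)\,(v_s\cdot n_r)$. Symmetrising this last term and inserting the skew-symmetry condition (\ref{Eq:SkewSymm}) identifies the associated symmetric matrix with the Gram matrix $NN^{T}$ when $A=I$, and with $A$ when $N=I$; both are positive semidefinite, so the quadratic term is $\leq 0$ and can be discarded from an upper bound. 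What survives is $\sum_r\big(\gamma_r U'(y_r)-\tfrac12 U''(y_r)\big)$ plus a constant, and summing condition (iii) over $r$---recalling that $V$ equals $\sum_r(\theta_{-b_r}y_r-U(y_r))$ up to an additive constant---gives exactly $\g V\leq cV+c'$. Together with properness of $V$, this inequality yields non-explosion by the classical Lyapunov (Khasminskii) criterion, establishing clause (1).

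For clause (2), let $\rho$ be smooth, integrable and nonnegative with $\g^*\rho=0$. For every $f\in C_c^\infty(\R^d)$, integration by parts---with no boundary contributions, since $f$ is compactly supported and the diffusion lives on all of $\R^d$---gives $\int\g f\,\rho\,\d x=\int f\,\g^*\rho\,\d x=0$. Because clause (1) furnishes a well-posed, non-exploding martingale problem, a stationarity-implies-invariance theorem of Echeverria/Has'minskii type then promotes this relation to genuine invariance, so that (after normalising $\rho$ to a probability density) $\rho(x)\,\d x$ is invariant. I expect the main obstacle to be precisely the drift estimate of clause (1): conditions (i)--(iii) are reverse-engineered so that (i) makes $V$ proper, (ii) makes $V$ bounded below, and (iii) is the drift inequality itself, but the genuinely delicate point is the sign of the quadratic-in-$U'$ term, whose nonnegativity rests on the skew-symmetry condition and must be checked uniformly across the two admissible geometries ($A=I$ and $N=I$); this is also the step that ties the regularity of $U$ back to the analytic Theorems~\ref{Thm:Analytic} and~\ref{Thm:AnalyticOrthant}.
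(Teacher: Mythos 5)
Your proposal is correct and follows essentially the same route as the paper: the same Lyapunov function $V(x)=\sum_r\big(\theta_{-b_r}\,n_r\cdot x-U(n_r\cdot x-b_r)\big)$ (up to an additive constant), the same use of (i) for properness, (ii) for the lower bound, skew-symmetry to reduce the quadratic-in-$U'$ term to $-|\sum_r U'\,n_r|^2$ (resp.\ a form in the nonnegative-definite $A$), and (iii) for the drift inequality, followed by an Echeverria/Ethier--Kurtz argument for clause (2). The only cosmetic difference is that the paper invokes McKean for local existence and Thygesen/Gard for the Lyapunov non-explosion criterion rather than a strong-solution argument.
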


\par\noindent If $N,Q$ satisfy the skew-symmetry condition then the choice $U(x)=-e^{-x}$ satisfies the hypotheses of Proposition \ref{prop:AssumptionU}, and hence is regular with respect to $(N,Q,\mu,A)$. We refer to the process with this choice of $U$ as the \emph{exponentially reflected Brownian motion (ERBM)} corresponding to $(N,Q,\mu,A)$.

\begin{rmk}
	\label{Rmk:kSmallerd}
	Apart from the requirement that the $n_j$ contains a basis of $\R^d$ one could allow $k<d$, that is fewer half-spaces than the dimension we are in. We can then decouple the `superfluous' dimensions as follows. Denote by $E_1\cong \R^k$ the span of $\{n_1,\ldots,n_k\}$ and by $E_2$ its orthogonal complement in $\R^d$. Let further $P_j$ be orthogonal projection from $\R^d$ onto $E_j$. By orthogonal invariance of Brownian motion it follows that $P_1(X)$ and $P_2(X)$ are independent. Further $P_2(X)$ is just a standard $(d-k)$-dimensional Brownian motion, whereas $P_2(X)$ is generalised RBM in $\R^k$ with the data $(P_1(N), P_1(Q), P_1(\mu),I)$.
	
With this in mind we will assume throughout that $k\geq d$.
\end{rmk}

\subsection{Invariant measures for GRBM in a general domain}

Having defined the GRBM process let us now establish the probabilistic interpretations of Theorems~\ref{Thm:Analytic}~and~\ref{Thm:AnalyticOrthant}. We first consider the identity-covariance case $A=I$. The main result is that, under the skew-symmetry condition, generalised reflected Brownian motion has an invariant measure in a certain product form. It follows directly from Theorems~\ref{Thm:Analytic}

\begin{cor}
	\label{Thm:MainGeneralPotential}
	Suppose that $U$ is regular with respect to $(N,Q,\mu,I)$ and that the $n_j,q_j$ satisfy the skew-symmetry condition (\ref{Eq:SkewSymm}). Then the $\R^d$-valued diffusion with generator $\g$ as defined in (\ref{Eq:DefGenerator}) has as invariant measure of the form $\nu_U(\d x) = p(x)\, \d x$, where $p$ is as in (\ref{Eq:DensityOrthant}).
\end{cor}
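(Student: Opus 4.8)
The plan is to obtain the corollary directly from Theorem~\ref{Thm:Analytic} together with part~(2) of the definition of a \regular potential. Since the standing hypothesis here is $A=I$, the generator $\g$ in (\ref{Eq:DefGenerator}) is exactly the operator treated in Theorem~\ref{Thm:Analytic}, and the generalised skew-symmetry condition (\ref{Eq:SkewSymm}) reduces to the original condition (\ref{Eq:HWSkewSymm}) (because $a_{rj}=0$ for $r\neq j$ and $\alpha=1$). I would therefore take as candidate density the function supplied by Theorem~\ref{Thm:Analytic},
\begin{equation*}
	p_U(x) = \exp\left\{2\left(\sum_{r=1}^k U\left(n_r\cdot x - b_r\right) - \gamma(\mu)\cdot x\right)\right\},
\end{equation*}
that is, the density (\ref{Eq:IMHW}) rather than (\ref{Eq:DensityOrthant}); the two coincide in the orthant reduction, where $n_r=e_r$, $2A-I-Q=I-Q$ and $\gamma(\mu)=(I-Q)^{-1}\mu$. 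The definition of regularity demands that a candidate density be smooth, non-negative and integrable; once these are verified, part~(2) applies verbatim and delivers $\nu_U(\d x)=p_U(x)\,\d x$ as an invariant measure.

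Two of the three requirements are immediate. As each map $x\mapsto n_r\cdot x - b_r$ and $x\mapsto\gamma(\mu)\cdot x$ is affine, $p_U$ is the exponential of a function that is as regular as $U$; in particular $p_U$ is strictly positive, and it is smooth whenever $U$ is (as in the exponential case $U(x)=-e^{-x}$). The identity $\g^* p_U = 0$ is precisely the content of Theorem~\ref{Thm:Analytic}, whose hypotheses --- the skew-symmetry condition and $U\in C^2$ --- hold by assumption.

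The substantive point, which I expect to be the main obstacle, is integrability of $p_U$ over $\R^d$. This is \emph{not} automatic from regularity, since part~(2) of the definition already presupposes an integrable density. The difficulty lies in the linear term $-2\gamma(\mu)\cdot x$ in the exponent, which decays in no direction; all the decay needed for integrability must come from $\sum_{r} U(n_r\cdot x - b_r)$. The natural strategy is to use the assumption that $\{n_1,\ldots,n_k\}$ spans $\R^d$ to split the directions of escape to infinity into two families. Along directions where some $n_r\cdot x\to-\infty$, a decay hypothesis on $U$ of the type in Proposition~\ref{prop:AssumptionU}(i), namely $x-U(x)\to\infty$, forces the exponent to $-\infty$ at a rate beating the linear term. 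Along the remaining recession directions --- those $u$ with $n_r\cdot u\geq 0$ for all $r$ --- the $U$-terms stay bounded below and one must instead verify that $\gamma(\mu)\cdot x\to+\infty$, i.e. that $\gamma(\mu)$ pairs strictly positively with every nonzero recession direction of $G$. Controlling this second case is where the geometry of the data $(N,Q,\mu)$ genuinely enters, and it is cleanest to establish integrability under the sufficient conditions of Proposition~\ref{prop:AssumptionU}, the natural companion hypothesis. With integrability in hand, part~(2) of regularity closes the argument; no separate uniqueness claim is needed, as the corollary asserts only existence.
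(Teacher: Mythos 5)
Your main line is exactly the paper's: the corollary is presented there as an immediate consequence of Theorem~\ref{Thm:Analytic} combined with part~(2) of the definition of a regular potential, and you are also right that the density intended is the one in (\ref{Eq:IMHW}) --- the citation of (\ref{Eq:DensityOrthant}) in the statement is a slip, the two expressions agreeing only in the orthant specialisation $N=I$. Smoothness and positivity of $p_U$ are, as you say, immediate, and no uniqueness claim is required.

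Where you go beyond the paper is the integrability discussion, and there your plan would not close. You are correct that part~(2) of regularity applies only to \emph{integrable} densities, so some hypothesis guaranteeing $\int p_U<\infty$ is needed; the paper omits it in this corollary while stating it explicitly in the companions (Corollary~\ref{Thm:MainGeneralDomain} assumes the measure is finite, Corollary~\ref{Thm:Orthant} assumes $\int p_U<\infty$). But integrability cannot be \emph{derived} from regularity of $U$, from skew-symmetry, or from the conditions of Proposition~\ref{prop:AssumptionU}: it depends on $\mu$ through $\gamma(\mu)$ in a way none of these control. Already in the exponential case $U(x)=-e^{-x}$ with $k=d$, all hypotheses of Proposition~\ref{prop:AssumptionU} hold for every drift $\mu$, yet the measure is finite only under a strict positivity condition on the drift (cf.\ Remark~\ref{Rmk:Ergodicity} and the finiteness hypothesis of Corollary~\ref{Thm:MainGeneralDomain}); if $\gamma(\mu)$ pairs nonpositively with some recession direction of $G$, the mass is infinite no matter how well-behaved $U$ is. Your recession-cone dichotomy correctly locates the obstruction, but the second case genuinely fails for admissible data, so the fix is to \emph{assume} $\int p_U<\infty$ (matching the sibling corollaries) rather than to prove it. With that hypothesis added, your argument --- and the paper's --- is complete in one step.
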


 \begin{rmk}
     	\label{Rmk:GradientDiff}
     	Under normal reflection (i.e. $Q=0$) the GRBM process is a \emph{gradient diffusion}, that is its generator is of the form $\inv{2}\,\Delta+\nabla W\cdot\nabla$. Note that $Q=0$ implies $\gamma(\mu)=\mu$. So in this case our result also follows from the well-known fact (see \cite{Hairer_LN} for example) that a gradient diffusion has $e^{2W(x)}\,\d x$ as invariant measure.
     \end{rmk}

\begin{rmk}
	\label{Rmk:Ergodicity}
	If the function $U$ satisfies the condition
		\begin{align*}
			-\inv{2}\,U''(x) + \left(1+\mu\right)U'(x)^2 - \mu\leq -\gamma 
		\end{align*}
		for all $x\in\R^d$ and the corresponding density $p$ is integrable then the stationary distribution obtained by suitably normalising (\ref{Eq:IMHW}) is actually unique. For example if $k=d$ and $U(x)=-e^{-x}$, then there is a unique invariant measure if and only if $\mu\cdot n_j> 0$ for all $j$.
\end{rmk}

\par\noindent Applied to the special case $U(x)=-e^{-x}$, Theorem~\ref{Thm:MainGeneralPotential} gives the invariant measure for exponentially reflected Brownian motion.

\begin{cor}
	\label{Thm:MainGeneralDomain}
	Suppose that the skew-symmetry condition (\ref{Eq:SkewSymm}) holds and the measure
	\begin{align*}
	\nu(\d x) & = \exp\left\{-2\left(\gamma(\mu)\cdot x + \sum_{j=1}^k e^{b_j-n_j\cdot x}\right) \right\} dx
\end{align*}
is finite.
	Then the exponentially reflecting Brownian motion corresponding to $(N,Q,\mu,I)$ has an invariant measure $\nu$.
\end{cor}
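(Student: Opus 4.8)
The plan is to derive Corollary~\ref{Thm:MainGeneralDomain} as an immediate specialisation of Corollary~\ref{Thm:MainGeneralPotential} to the potential $U(x)=-e^{-x}$. The key observation is that this $U$ is twice continuously differentiable and, as already noted in the text immediately after Proposition~\ref{prop:AssumptionU}, satisfies all the hypotheses of that proposition; hence it is \regular with respect to $(N,Q,\mu,I)$. Thus the general machinery applies verbatim, and the only genuine work is to check that the density in \bref{Eq:DensityOrthant} reduces, for this choice of $U$, to the stated exponential measure $\nu$.

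First I would substitute $U(x)=-e^{-x}$ into the product-form density. With $A=I$ the matrix $(2A-I-Q)^{-1}\mu$ appearing in \bref{Eq:DensityOrthant} becomes $(I-Q)^{-1}\mu$; under the skew-symmetry condition \bref{Eq:SkewSymm} with $A=I$ (so $\alpha=1$) this coincides with $\gamma(\mu)$ as defined in \bref{Eq:DefGamma}, using Remark~\ref{Rmk:Gamma} that $\gamma(\mu)$ is independent of the choice of invertible submatrix. Next, writing out the sum $\sum_{j} U(n_j\cdot x-b_j)=-\sum_{j}e^{-(n_j\cdot x-b_j)}=-\sum_{j}e^{b_j-n_j\cdot x}$, I would collect the two contributions inside the exponential to obtain exactly
\begin{align*}
p(x) &= \exp\left\{-2\left(\gamma(\mu)\cdot x+\sum_{j=1}^k e^{b_j-n_j\cdot x}\right)\right\},
\end{align*}
which is precisely the density of $\nu$.

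Finally, I would invoke Corollary~\ref{Thm:MainGeneralPotential}: since $U$ is \regular and the skew-symmetry condition holds, $\nu_U(\d x)=p(x)\,\d x$ is an invariant measure for the diffusion with generator $\g$, which by definition is the exponentially reflecting Brownian motion corresponding to $(N,Q,\mu,I)$. The finiteness hypothesis on $\nu$ stated in the corollary guarantees that $p$ is integrable, so that $\nu$ can be normalised to a genuine (probability) invariant measure; this is exactly the integrability requirement in clause~(2) of the definition of \regular.

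I do not expect any serious obstacle here, as the statement is essentially a corollary of the already-established general result. The only point requiring a little care is the identification of $(I-Q)^{-1}\mu$ with $\gamma(\mu)$ and the verification that it is well-defined independently of the chosen submatrix $\overline{N}$; this is handled by Remark~\ref{Rmk:Gamma}. One should also note that the hypotheses of Proposition~\ref{prop:AssumptionU} for $U(x)=-e^{-x}$ were asserted but, strictly, the elementary inequalities (i)--(iii) for this explicit $U$ ought to be checked, which is routine; the mildly delicate one is condition~(iii), balancing $\gamma_r e^{-x}-\tfrac12 e^{-x}$ against the linear-plus-exponential growth on the right, but this holds for suitable $\kappa,\alpha$ since both sides are controlled by $e^{-x}$ as $x\to-\infty$ and by the linear term as $x\to+\infty$.
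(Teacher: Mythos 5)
Your proposal is correct and follows exactly the paper's route: the paper offers no separate argument for this corollary, simply noting that it is Corollary~\ref{Thm:MainGeneralPotential} applied to $U(x)=-e^{-x}$, whose regularity with respect to $(N,Q,\mu,I)$ is supplied by Proposition~\ref{prop:AssumptionU}. One small caveat: your identification of the drift coefficient via $(2A-I-Q)^{-1}\mu=(I-Q)^{-1}\mu$ only parses when $k=d$ (otherwise $Q$ is $k\times d$ and $I-Q$ is not square); in the general polyhedral case the density should be read off from (\ref{Eq:IMHW}), where the coefficient is $\gamma(\mu)=\left(I-\overline{N}^{-1}\overline{Q}\right)^{-1}\mu$ directly --- this traces back to the paper's own citation of (\ref{Eq:DensityOrthant}) rather than (\ref{Eq:IMHW}) in Corollary~\ref{Thm:MainGeneralPotential}, and the exponential formula you arrive at is the correct one.
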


\begin{rmk}
When $d=k$ then $\nu$ can be realised as the distribution of a $\R^d$-valued random variable $\chi$ such that 
\begin{align*}\
	\left(n_1\cdot \chi,\ldots,n_d\cdot \chi\right) &\eqdist \left(-\log\left(\frac{\gamma_1}{2} \right),\ldots,-\log\left(\frac{\gamma_d}{2} \right) \right)
\intertext{where the $\gamma_j$ are independent gamma random variables with parameters $\theta_j$ and the vector $\theta$ is given by}
	\theta & =2\left(N^T-Q^T\right)^{-1}\mu.
	\intertext{That is, the law of $n_j\cdot \chi$ is given by $\Lambda_{\theta_j}$, where for $\alpha>0$,}
	\Lambda_\alpha(\d x)&= \inv{Z}\,\exp\left\{-2\left(\alpha x + e^{-x}\right) \right\}\,\d x
\end{align*}
 
\end{rmk}

\begin{rmk}
	\label{Rmk:RBMScaling}
	Let $\beta>0$ and set $U_\beta(x)=-\inv{\beta}\, e^{-\beta x}$ for $\beta>0$. The diffusion with generator $\g$ corresponding to $U=U_\beta$ should converge, as $\beta\to\infty$ to the Harrison--Williams reflected Brownian motion. Moreover (cf. \cite{CO'CSZ11}, section 4.1) the log-gamma random variables converge to the exponential distribution, and we recover the main result of \cite{Williams87}. In this sense our results can be considered as a generalisation of those of \cite{HW87, Williams87}.
\end{rmk}

\subsection{Invariant measure for GRBM in an orthant}

We now turn to exponential Brownian motion in an orthant, driven by a $d$-Brownian motion with drift $\mu$ that is allowed to have a general, possibly singular covariance. It can be realised as $B(t) = \sigma\beta(t)$ for a standard Brownian motion $\beta$, possibly of a different dimension, and a matrix $\sigma$, which is generally rectangular. The covariance matrix $A=(a_{jk})=\sigma \sigma^T$ of $B$ has $1$ on all its diagonal entries (or, equivalently, the rows of the rectangular matrix $\sigma$ have unit length). This requirement can be weakened slightly, see Remark~\ref{Rmk:ScalingA}.

In this case the generator of the generalised RBM is given by
\begin{align*}
	\g &= \inv{2}\,\nabla\cdot\left(A\nabla\right) +\left[\sum_{r=1}^d \, U'\left(x_r\right) \left(q_{r}+e_r \right)- \mu  \right]\cdot \nabla.
\end{align*}
The probabilistic interpretation of Theorem~\ref{Thm:AnalyticOrthant} is the following

\begin{cor}
	\label{Thm:Orthant}
	Suppose that $U\colon\R\map\R$ is regular with respect to $(I,Q,\mu,A)$ and that for all $j\ne r$ the \emph{modified skew-symmetry condition}
	\begin{align}
		%\label{Eq:SkewSymOrthant}
		q_{jr} + q_{rj} & =2a_{rj}\quad\quad\forall\, r\ne j
	\end{align}
	holds. Then the GRBM in an orthant corresponding to $U$ has an invariant measure whose density with respect to Lebesgue measure is given by
	\begin{align}
	p_U(x) &=  \exp\left\{2\left[ \sum_{j=1}^d U\left(x_j\right) - \left(2 A - I-Q\right)^{-1}\mu\cdot x \right]  \right\}
\end{align}
provided that $\int p_U<\infty.$
\end{cor}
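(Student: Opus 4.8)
The plan is to obtain the corollary as an essentially immediate consequence of the analytic statement in Theorem~\ref{Thm:AnalyticOrthant} combined with the defining property of a \regular potential. First I would check that the generator $\g$ displayed in the corollary is exactly the operator covered by Theorem~\ref{Thm:AnalyticOrthant}. In the orthant setting we have $N=I$, so that $n_r=e_r$, $k=d$, and the reflection vectors are $v_r=q_r+e_r$; thus the drift term $\sum_{r=1}^d U'(x_r)(q_r+e_r)\cdot\nabla$ in the corollary is precisely the reflection drift appearing in the operator of Theorem~\ref{Thm:AnalyticOrthant}. Moreover the modified skew-symmetry condition $q_{jr}+q_{rj}=2a_{rj}$ assumed here is literally the condition $a_{rj}=(q_{jr}+q_{rj})/2$ required there, and the candidate density $p_U$ coincides with the function $p$ in that theorem. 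Having matched the hypotheses, I would invoke Theorem~\ref{Thm:AnalyticOrthant} to conclude that $\g^*p_U=0$, where $\g^*$ is the formal adjoint of $\g$.

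The second step is to verify that $p_U$ satisfies the three requirements in the definition of a \regular potential, so that the identity $\g^*p_U=0$ can be promoted to the assertion that $p_U(x)\,\d x$ is invariant. Since $p_U$ is the exponential of a twice continuously differentiable function of $x$, it is non-negative and inherits the differentiability of $U$; integrability is exactly the standing hypothesis $\int p_U<\infty$. With these properties in hand, condition (2) in the definition of regularity — which states that any smooth, integrable, non-negative $\rho$ with $\g^*\rho=0$ yields an invariant measure $\rho(x)\,\d x$ — applies with $\rho=p_U$, giving the claim.

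I do not expect a genuine obstacle: the mathematical content is already contained in Theorem~\ref{Thm:AnalyticOrthant} and in the definition of a \regular potential, so the proof reduces to confirming that the hypotheses line up and that $p_U$ is an admissible density. The one point that merits a word of care is the smoothness clause in the definition of regularity: the density $p_U$ is only as differentiable as $U$ allows, so strictly speaking one should either take $U$ smooth (as in the exponential case $U(x)=-e^{-x}$) or read ``smooth'' in that definition as ``differentiable enough for $\g^*\rho$ to be defined'', which the standing hypothesis that $U$ be twice continuously differentiable already guarantees.
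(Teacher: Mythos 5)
Your proposal is correct and follows the paper's own route exactly: the authors prove the analytic identity $\g^*p_U=0$ in Theorem~\ref{Thm:AnalyticOrthant} and then obtain the corollary immediately from condition (2) in the definition of a \regular potential, using the integrability hypothesis $\int p_U<\infty$. Your remark about the smoothness of $p_U$ being limited by that of $U$ is a fair observation but does not change the argument.
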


\begin{rmk}
	\label{Rmk:ScalingA}
	If the rows of $A$ are not unit vectors but all have the same Euclidean norm $\sqrt{\alpha}$ then we can scale the whole system so that our reasoning still applies. In this setting we get the stationary density in product form
\begin{align}
	\label{Eq:SkewSymOrthantScaled}
	p(x)&= \exp\left\{ 2\left[ \sum_{j=1}^d U \left(\frac{x_j}{\sqrt{\alpha}}\right) - \left(\frac{2}{\alpha}\,A -(I+Q)\right)^{-1}\mu\cdot x \right]\right\}.
\end{align}
\end{rmk}

\section{Examples}

\label{Sec:Examples}

\subsection{One-dimensional ERBM and Dufresne's identity}

As a warm-up let us consider one-dimensional exponentially RBM. Here we can use a particular realisation of the process and Dufresne's identity. 

In this situation $n=1$ and $q=0$. All conditions, including skew-symmetry (\ref{Eq:SkewSymm}), are satisfied. Let further $\mu>0$. The generator of $X$ in this simple case is given by
\begin{align*}
	\uind{\g}{\mu}_1 & = \inv{2} \frac{\d^2}{\d x^2} + \left(e^{-x}-\mu\right) \frac{\d}{\d x}.
\intertext{By It\^o's formula and stochastic integration by parts \cite{RY, KaratzasShreve} the process $X$ given by}
		X(t) & = \log \int_0^t e^{\uind{B}{\mu}(s)-\uind{B}{\mu}(t)}\, \d s
	\end{align*}	
	is a diffusion with generator $\uind{\g}{\mu}_1$. The invariant measure of $X$ is that of $\eta=\log(\xi)$ where $\xi\eqdist 4\uind{A}{2\mu}_\infty$ and the process $\uind{A}{\mu}$ is defined by \cite{Dufresne01Asian, Dufresne01Geometric}
\begin{align*}
	\uind{A}{\mu}_t & =\int_0^t e^{2(B(s)-\mu s)}\, \d s.
\end{align*}
Recall Dufresne's identity \cite[Corollary 4]{Dufresne01Asian}. 

\begin{prop}
	Let $\mu>0$, then
\begin{align*}
	\left(2\uind{A}{\mu}_\infty\right)^{-1} & \eqdist \gamma_\mu
\end{align*}
where $\gamma_\mu$ has the Gamma distribution with parameter $\mu$.
%	f_{\gamma_\mu}(\d x) & = \frac{x^{k-1} e^{-x}}{\Gamma(\mu)}.
\end{prop}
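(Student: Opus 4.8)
The plan is to realize $\uind{A}{\mu}_\infty=\int_0^\infty e^{2(B_s-\mu s)}\,\d s$ as the lifetime of a Bessel process of negative index and then to read off the law of its reciprocal. Write $\uind{B}{\mu}_t=B_t-\mu t$. By Lamperti's representation of the exponential of a Brownian motion with drift, there is a Bessel process $R$ of index $-\mu$ (dimension $\delta=2-2\mu$), started from $R_0=1$, such that
\[
\exp\bigl(\uind{B}{\mu}_t\bigr)=R\bigl(\uind{A}{\mu}_t\bigr),\qquad \uind{A}{\mu}_t=\int_0^t\exp\bigl(2\uind{B}{\mu}_s\bigr)\,\d s .
\]
First I would record that the additive functional $\uind{A}{\mu}_t$ is precisely the clock turning $\exp(\uind{B}{\mu})$ into $R$; this is where the factor $2$ in the exponent defining $\uind{A}{\mu}$ is essential.

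Second, I would let $t\map\infty$. Since $\mu>0$ we have $\uind{B}{\mu}_t\map-\infty$ almost surely, so $\exp(\uind{B}{\mu}_t)\map 0$ and the clock $\uind{A}{\mu}_t$ increases to the finite limit $\uind{A}{\mu}_\infty$. On the Bessel side this forces $R$ to reach the origin, and matching the two limits gives
\[
\uind{A}{\mu}_\infty=T_0:=\inf\{u\geq 0\colon R_u=0\},
\]
the hitting time of $0$ by $R$ started at $1$. The mild technical points are that $\uind{A}{\mu}_\infty<\infty$ a.s. (immediate from $\uind{B}{\mu}_s\map-\infty$) and that a Bessel process of index $-\mu<0$ does reach $0$ in finite time, so $T_0$ is the genuine lifetime of the clock.

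Third, and this is the heart of the argument, I would compute the law of $T_0$. For a Bessel process of index $-\mu$ started at $x$ the density of the hitting time of $0$ is
\[
\P_x(T_0\in\d t)=\frac{x^{2\mu}}{2^{\mu}\Gamma(\mu)}\,t^{-\mu-1}\exp\!\left(-\frac{x^2}{2t}\right)\d t ,
\]
that is, $T_0$ is inverse-gamma with parameters $(\mu,x^2/2)$. Setting $x=1$ then shows that $(2T_0)^{-1}$ has the $\Gamma(\mu)$ law. I would obtain this density either from the explicit transition kernel of the Bessel process absorbed at $0$, or by solving the ODE for $u_\lambda(x)=\E_x[e^{-\lambda T_0}]$ (a modified-Bessel equation whose decaying solution is a power times a Macdonald function) and inverting the resulting Laplace transform; the scaling relation that $T_0$ under $\P_x$ equals $x^2T_0$ under $\P_1$ in law fixes the dependence on $x$. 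Combining the three steps gives $(2\uind{A}{\mu}_\infty)^{-1}=(2T_0)^{-1}\eqdist\gamma_\mu$, as claimed.

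The main obstacle is the third step: pinning down the precise inverse-gamma/gamma law, and in particular the exact parameter $\mu$ and the factor $2$, rather than merely identifying $T_0$ qualitatively. An alternative that avoids Bessel hitting times argues directly from the diffusion $X$ of the preceding display: solving the one-dimensional stationary equation $\tfrac12 p'-(e^{-x}-\mu)p=0$ yields the invariant density $p(x)\propto\exp\{-2(\mu x+e^{-x})\}$, a time-reversal of Brownian motion identifies $e^{X(\infty)}$ with $4\uind{A}{2\mu}_\infty$, and the change of variables $\xi=e^{X(\infty)}$ converts this density into the inverse-gamma law; relabelling $2\mu\mapsto\mu$ then recovers the statement. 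Here the delicate point is instead to justify convergence of the time-$t$ marginal of $X$ to its unique equilibrium and the passage to the limit in the time-reversal identity.
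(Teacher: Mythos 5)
Your main argument is correct, and it is worth noting that the paper itself offers no proof of this proposition: it is quoted verbatim as Dufresne's identity, with a citation to [Corollary 4] of the Dufresne reference, and is then used in the surrounding text as an external input to cross-check the GRBM invariant-measure computation in the one-dimensional example. Your proof via Lamperti's relation is the standard independent derivation and all the constants check out: with drift $-\mu$ the Bessel index is $-\mu$ (dimension $2-2\mu$), the clock $\uind{A}{\mu}_t$ is indeed the Lamperti time change, the identification $\uind{A}{\mu}_\infty=T_0$ is justified by $\uind{B}{\mu}_t\to-\infty$, and the hitting-time density you quote gives exactly $(2T_0)^{-1}\eqdist\gamma_\mu$ at $x=1$ (one could add that for $\mu\geq 1$ the dimension $2-2\mu$ is nonpositive, so the Bessel process must be understood via the squared-Bessel SDE killed at the origin, but Lamperti's relation and the hitting-time law hold in that generality). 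Be careful with your proposed ``alternative'' route, however: within the logic of this paper it is essentially circular, since the paper's point in this subsection is precisely that the stationary density $p(x)\propto\exp\{-2(\mu x+e^{-x})\}$ of the diffusion $X$, combined with the realisation $e^{X(\infty)}\eqdist 4\uind{A}{2\mu}_\infty$, \emph{recovers} Dufresne's identity --- so deriving the identity from that stationary density is running the paper's consistency check backwards rather than giving an independent proof; and as you note yourself, it also requires justifying convergence to equilibrium, which the Bessel argument avoids entirely.
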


\par\noindent Hence $\xi\eqdist \frac{2}{\gamma_{2\mu}}$ and so the invariant distribution of the process $X$ is realised by
\begin{align*}
	\eta & = - \log \xi^{-1} \eqdist -\log \left(\frac{\gamma_{2\mu}}{2}\right).
\end{align*} 
I.e. our results recover Dufresne's identity. Let us also note that, replacing the potential $e^{-x}$ by $\inv{\beta}e^{-\beta x}$ (as in Remark~\ref{Rmk:RBMScaling}) it can be verified directly that both the one-dimensional reflecting Brownian motion and the exponential distribution appear in the scaling limit as $\beta\to\infty$.

\subsection{TASEP-like particle systems}

\label{Sec:TASEPlike}

Consider the following particle system on the line. There are $n$ particles, with positions $X_1(t),\ldots,X_n(t)$ at time $t$, labelled such that $X_1(0)\leq X_2(0)\leq \cdots\leq X_n(0)$. The left-most particle $X_1$ evolves like a Brownian motion with drift $\nu_1$, the second particle $X_2$ like a Brownian motion with drift $\nu_2$ reflected off $X_1$ and so on. This particle system, first introduced by \ts{Glynn--Whitt} \cite{GlynnWhitt91}, can be thought of as a Brownian analogue of the totally asymmetric simple exclusion process (TASEP).

Replacing the `hard' reflection by a `soft' one, determined by the potential $U$ we obtain a diffusion on $\R^n$ which can be defined as the unique strong solution of the system of SDEs
\begin{align}
	\notag
	\d X_1(t) & = \d B_1(t)\\
	\label{Eq:SDETASEP}
	\d X_{k+1}(t) & = \d B_{k+1}(t) - U\left(X_{k+1}(t)-X_{k}(t)\right)\,\d t
	\end{align}
where $B=(B_1,\ldots,B_n)$ is an $n$-dimensional Brownian motion with drift $\nu\in\R^n$. The choice of $U(x)=-e^{-x}$ was first considered in \cite{OConnellYor01}. By viewing the process as an analogue of a queueing system and applying a suitably generalised version of Burke's theorem a stationary distribution for the differences $Y_j=X_{j+1}-X_j$ is obtained. In particular, if the drift of the driving Brownian motion is given by $\nu=(\alpha,0,\ldots,0)$ then the stationary distribution of $Y$ is given by $\bigotimes_{j=1}^{n-1}\Lambda_\alpha$. By the law of large numbers, it follows that the speed of the particle system started in equilibrium is given by
\begin{align}
	\label{Eq:MPDExp}
	\Psi(\alpha) & = \lim_{n\to\infty} \inv{n}\, X_n(n) = \E\zeta_1 = -\, \frac{\Gamma'(\alpha)}{\Gamma(\alpha)}
\end{align}
where $\Gamma$ is the Gamma function and $\Gamma'$ its derivative, the digamma function. This allows one to compute the free energy of a related semi-discrete directed polymer model. Suppose now that the drift of the driving Brownian motion $B$ is zero and define
\begin{align*}
Z_n(t) & = \int_{\Delta_n(t)} \exp\left\{B_1(s_1)+ B_2(s_2)-B_2(s_1) + \ldots + B_n(t) - B_n(s_{n-1}) \right\}\, \d s_1\ldots\d s_{n-1}
\end{align*}
where $\Delta_n(t)=\{(s_1,\ldots,s_{n-1})\in [0,\infty)\colon s_1\leq\ldots\leq s_{n-1}\leq t\}$. It was shown in \cite{OConnellYor01} that,
almost surely,
\begin{align}
	\notag
\gamma(\alpha) : = -\left(-\Psi\right)^*(\alpha) &= \lim_{n\to\infty} \inv{n}\log Z_n(\alpha n)
\intertext{where $\cdot^*$ denotes convex conjugation, i.e. $f^*(\alpha)=\sup\{\alpha x - f(x)\colon x\in\R\}$. Note that $\left(\log Z_k\right)_{k=1}^n$ satisfies (\ref{Eq:SDETASEP}) for the exponential case, with a particular entrance law. Hence, if we start the particle system $X_k$ with this entrance law (which corresponds in an appropriate sense to the `step initial condition' $X_1(0)=0$ and $X_{j+1}(0)-X_j(0)=-\infty$ for each $j$) 
we can compute the speed of the system: almost surely,}
	\label{Eq:SpeedExp}
	 \lim_{n\to\infty} \inv{n}\,X_n(n\alpha) &= \gamma(\alpha) .
\end{align}
For a general potential $U$, the gaps between the particles, defined by $Y_j(t)=Y_j(t)-Y_{j+1}(t)$ evolve as a GRBM in the $n-1$-dimensional orthant; hence, by Corollary~\ref{Thm:Orthant}, $Y$ has an invariant measure given by
\begin{align}
	\notag
	\xi_U(\d x) &= \inv{Z_U}\,\prod_{k=1}^{n-1} \exp\left\{-2\left( U(x_k) + \left(\nu_k-\nu_1 \right) x_k\right) \right\}\, \d x_k
\intertext{where we assume that there exists $Z_U\in(0,\infty)$ such that $\xi_U$ is a probability measure. So then it is still true that $Y$ has a stationary distribution in product form, and under the special drift $\nu=(\alpha,0,\ldots,0)$ the marginals are identically distributed. Hence the mean particle density of the system started in equilibrium is given, almost surely, by}
	\label{Eq:MPDGen}
	\Psi_U(\alpha) & = \lim_{n\to\infty} \inv{n}\, X_n(n) = \inv{Z_U}\int_{\R}\,x\exp\left\{-2 U(x)+2\alpha x \right\}\, \d x.
\end{align}
provided that this is finite. Hence, under reasonable assumptions on $U$, it should be the case that, from an appropriate entrance law corresponding to the `step
initial condition' $X_1(0)=0$ and $X_{k+1}(0)-X_{k}(0)=-\infty$ for each $k$, the speed of the system will be given by $-(-\Psi_U)^*$.

\subsection{Examples related to the Pitman transform}

The special case of $U(x)=-e^{-x}$ in Section~\ref{Sec:TASEPlike} is also related to the exponential analogue of the Pitman transform \cite{BBO_Crystal, BBO_Littelman}. This transform is defined, for $\alpha\in\R^n$ and $\eta\in\c\left((0,\infty); \R^n\right)$ by
\begin{align}
	\label{Eq:DefGPT}
	T_\alpha \eta (t) &= \eta(t)+\left(\log \int_0^t e^{-\alpha^\vee\cdot \eta(s)}\,\d s\right)\alpha,
	\intertext{where $\alpha^\vee=2\alpha/|\alpha|^2$.
Suppose now that $\eta$ is a $n$-dimensional Brownian motion. 
For a sequence of vectors $\gamma_1,\ldots,\gamma_q\in\R^n$ and $j\in\{1,\ldots,q\}$ define}
	\label{Eq:DefGPTH}
	\eta_{j} & =  T_{\gamma_j}\circ\cdots\circ T_{\gamma_1}(\eta) = T_{\gamma_{j-1}}\left(\eta_{j-1}\right)
	\intertext{and then set}
	\label{Eq:DefGPTY}
	y_j & = \log \int_0^t e^{\gamma_{j}^\vee\cdot\left(\eta_{j-1}(t)-\eta_{j-1}(s)\right)}\,\d s.
\end{align}
We will see below (Proposition~\ref{Prop:PitmanQueue}) that the process $(y_1,\ldots,y_q)$ fits in the framework of GRBM and hence has a product form invariant measure.

Of particular interest is the following special choice of $\gamma_1,\ldots,\gamma_q$. 
Let $\alpha_j=e_j-e_{j+1}$, $j=1,\ldots,n-1$,
where $e_1,\ldots,e_n$ denotes the standard orthonormal basis of $\R^n$.
Let $S(n)$ denote the symmetric group on $n$ elements and denote the adjacent transpositions
by $s_j=(j\ \ j+1)\in S(n)$. Under the natural action of $S(n)$ on $\R^n$, the $s_j$ correspond 
to reflections in the orthogonal hyperplane to $\alpha_j$. Let
\begin{align}
	\notag
	\sigma_0 & = \begin{pmatrix}
		1& 2&\ldots& n\\ n & n - 1&\ldots&1
	\end{pmatrix}
	\intertext{be the longest element in $S(n)$, which has the following reduced decomposition of length $q=\frac{n(n-1)}{2}$ }
	\notag
	\sigma_0 & = s_1\ldots s_{n-1}\, s_1\ldots s_{n-2}\ \ldots\ s_1 s_2\,s_1.
	\intertext{Let $\gamma_1,\ldots,\gamma_q\in\R^n$ be defined by}
	\label{Eq:SpecialGamma}
	\gamma_1\ldots\gamma_q & =\alpha_1\ldots\alpha_{n-1}\, \alpha_1\ldots\alpha_{n-2}\ \ldots\ \alpha_1\alpha_2\,\alpha_1
\end{align} 
and define the process $y=(y_1,\ldots,y_q)$ as in (\ref{Eq:DefGPTH}) and (\ref{Eq:DefGPTY}). Then $y$ has an invariant measure with density
\begin{align*}
	&\exp\left\{-\left[2\sum_{j=1}^m e^{-x_j/\sqrt{2}}+ 2 \theta\left(\mu\right)\cdot x\right] \right\}
\end{align*}
provided that $\mu\cdot \alpha_r>0$ for all $r$. Here $\theta(\mu)$ is a parameter which
will be defined in Proposition~\ref{Prop:PitmanQueue}, see (\ref{Eq:DefBeta}).

It follows from the definition of the $y_i$ that the stochastic evolution of the process $\left(y_1,\ldots,y_{n-1}\right)$ is identical in law to the inter particle distances in the
TASEP-like process $X$ from Section~\ref{Sec:TASEPlike} given the appropriate
relation between the drifts $\mu$ and $\nu$. 
On the other hand,
for any $\sigma\in S(n)$ and any reduced decomposition $\sigma = s_{j_1}\ldots s_{j_r}$ the operator $T_\sigma:= T_{\alpha_{j_r}}\circ\ldots\circ T_{\alpha_{j_1}}$ only depends on $\sigma$, not the choice of decomposition \cite{BBO_Littelman}. It was shown in \cite{OConnellTodaLattice} that the process 
$\left(\left(T_{\sigma_0}\eta\right)(t)\right)_{t\geq 0}$ is a diffusion which
is closely related to the quantum Toda lattice, and can be thought of as a positive-temperature analogue of Dyson's Brownian motion. This allows one to compute the Laplace transform of the partition function of the semi-discrete directed polymer mentioned above. 
The process $(y_1,\ldots,y_q)$ can also be interpreted in terms of 
a natural diffusion on the set of totally positive lower triangular matrices~\cite{OConnell_Whittaker}.
The fact that this process has an invariant measure in product form can be viewed as a 
positive temperature analogue of Proposition 5.9 in \cite{BBO_Crystal}.

The appearance of product-form invariant measures in this context 
does not depend on the algebraic structure described above. In particular the above example has a natural generalisation which replaces $S(n)$ with an arbitrary finite reflection group.  
We remark that a study of diffusions analogous to $T_{\sigma_0}(\eta)$ for the generalised 
quantum Toda lattice (corresponding to arbitrary finite reflection groups) can be found in 
\cite{Reda_Thesis}.

The following proposition describes the general setting.

\begin{prop}
		\label{Prop:PitmanQueue}
	Let $\gamma_1,\ldots,\gamma_q$ be any collection of vectors in $\R^m$ such that $\abs{\gamma_j}^2=2$ and assume that $U\colon\R\map\R$ is regular with respect to $(I,\Gamma,\mu,A)$ where $A_{jk}=\gamma_{j}\cdot \gamma_k$ and the matrix $\Gamma$ is defined by 
	\begin{align}
		\notag
		\Gamma_{jk} & =
		\begin{cases}	
			0 & \text{if } j\geq k\\
			\gamma_j\cdot\gamma_k\quad\quad & \text{otherwise.}
		\end{cases}
		\intertext{Suppose further that $y=(y_1,\ldots,y_q)$ satisfies the system of SDEs}
		\label{Eq:ExSDEs}
		\d y_k(t) &= \begin{cases} 
			\d\left(\gamma_1\cdot \eta(t)\right) +U'\left(y_1(t)\right)\, \d t & \text{if } k=1 \\
				    \d\left(\gamma_k\cdot \eta(t)\right) +  \left[ \sum_{j=1}^{k-1} \left(\gamma_k\cdot\gamma_j\right)\, U'\left(y_j(t)\right) + U'\left(y_k(t)\right)\right]\, \d t\quad & \text{if } k>1
			\end{cases}
\end{align}
and that $\theta_r\cdot\mu>0$ for all $r$, where $\theta_r(\mu)=-\beta_r\cdot \mu$, the parameters $\beta_r$ are given by
\begin{align}
	\label{Eq:DefBeta}
	\beta_k & = \begin{cases}
		\gamma_1 & \text{if } k=1\\
		\wt{s}_{\gamma_1}\cdots \wt{s}_{\gamma_{k-1}}(\gamma_k)\quad & \text{if } k>1.
		\end{cases}
	\intertext{and the $\wt{s}_\gamma$ are defined by $\wt{s}_\gamma(x)=x-\frac{2\gamma\cdot x}{\abs{\gamma}^2}\,\gamma$. Then the $q$-dimensional diffusion $y$ has an invariant measure with density}
		&\exp\left\{2\sum_{j=1}^q U\left(\frac{x_j}{\sqrt{2}}\right) - 2 \theta\left(\mu\right)\cdot x \right\}.
	\end{align} 
\end{prop}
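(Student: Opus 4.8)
The plan is to recognise the diffusion $y$ as a generalised reflected Brownian motion in the $q$-dimensional orthant and to deduce the statement from Corollary~\ref{Thm:Orthant} in its rescaled form (Remark~\ref{Rmk:ScalingA}); the only genuinely new input is the identification of the drift parameter $\theta(\mu)$. First I would read off the generator of $y$ from the system~(\ref{Eq:ExSDEs}). Let $G$ denote the $q\times m$ matrix whose $k$-th row is $\gamma_k$, and take $\eta$ to be a Brownian motion in $\R^m$ with drift $\mu$. The martingale parts $\gamma_k\cdot\d\eta$ have joint quadratic covariations $\gamma_j\cdot\gamma_k\,\d t=A_{jk}\,\d t$, so the second-order part of the generator is $\inv2\,\nabla\cdot(A\nabla)$, while the drift of $\eta$ contributes $\gamma_k\cdot\mu$ in the $k$-th coordinate. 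Comparing with the orthant generator $\g=\inv2\,\nabla\cdot(A\nabla)+\sum_r U'(x_r)(q_r+e_r)\cdot\nabla-\mu\cdot\nabla$, the off-diagonal drift terms $\sum_{j<k}(\gamma_k\cdot\gamma_j)U'(y_j)$ force $q_{jk}=\Gamma_{jk}$, i.e. $Q=\Gamma$, and the constant part identifies the orthant drift as $-G\mu$. Thus $y$ is the orthant GRBM with data $(I,\Gamma,-G\mu,A)$, to which the regularity hypothesis and Corollary~\ref{Thm:Orthant} apply.

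Next I would verify the modified skew-symmetry condition and invoke the corollary. Since $\abs{\gamma_j}^2=2$, the diagonal of $A$ is constant equal to $2$, which is the situation of Remark~\ref{Rmk:ScalingA} with $\alpha=2$. For $j\ne r$ exactly one of $\Gamma_{jr},\Gamma_{rj}$ is nonzero and equals $\gamma_j\cdot\gamma_r$, so that $q_{jr}+q_{rj}=\gamma_j\cdot\gamma_r=2a_{rj}/\alpha$, which is precisely the rescaled skew-symmetry condition~(\ref{Eq:SkewSymm}). Corollary~\ref{Thm:Orthant} together with Remark~\ref{Rmk:ScalingA} then yields, provided the density is integrable (which the stated condition $\theta_r(\mu)>0$ ensures), the invariant density $\exp\{2[\,\sum_j U(x_j/\sqrt2)-(A-I-\Gamma)^{-1}(-G\mu)\cdot x\,]\}$.

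The remaining task, identifying the linear coefficient with $\theta(\mu)$, is the step I expect to be the crux. Writing $A=\Gamma+2I+\Gamma^T$ gives the clean simplification $A-I-\Gamma=I+\Gamma^T$, so the coefficient equals $(I+\Gamma^T)^{-1}(-G\mu)$. Letting $B$ be the matrix with rows $\beta_1,\ldots,\beta_q$, it then suffices to prove the identity $(I+\Gamma^T)B=G$, equivalently $\gamma_r=\beta_r+\sum_{s<r}(\gamma_r\cdot\gamma_s)\beta_s$ for every $r$. I would establish this by induction on the length of the list, peeling off the leftmost reflection: as $\wt{s}_{\gamma_1}$ is an orthogonal involution it preserves all inner products and satisfies $\beta_s=\wt{s}_{\gamma_1}(\beta'_s)$ for $s\ge2$, where the $\beta'_s$ are the analogous vectors built from the shorter list $\gamma_2,\ldots,\gamma_q$; applying $\wt{s}_{\gamma_1}$ to the identity for that list and using $\wt{s}_{\gamma_1}\gamma_r=\gamma_r-(\gamma_1\cdot\gamma_r)\gamma_1=\gamma_r-(\gamma_1\cdot\gamma_r)\beta_1$ reproduces the identity for the full list. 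With $(I+\Gamma^T)^{-1}G=B$ in hand, the coefficient $(I+\Gamma^T)^{-1}(-G\mu)=-B\mu$ has $r$-th entry $-\beta_r\cdot\mu=\theta_r(\mu)$, so the density is exactly $\exp\{2\sum_j U(x_j/\sqrt2)-2\theta(\mu)\cdot x\}$, as claimed. The main obstacle is precisely this combinatorial linear-algebra identity, which links the reflection description~(\ref{Eq:DefBeta}) of the $\beta_r$ to the matrix $(I+\Gamma^T)^{-1}$; everything else is bookkeeping and a direct appeal to the orthant result.
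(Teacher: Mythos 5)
Your proposal is correct and follows the same route as the paper: identify $y$ as a GRBM in the $q$-dimensional orthant with covariance $A$ and reflection matrix $\Gamma$, check the (rescaled) modified skew-symmetry condition, and apply Corollary~\ref{Thm:Orthant} via Remark~\ref{Rmk:ScalingA}. The paper's proof stops there and leaves the identification of the drift coefficient implicit, whereas you supply the missing detail --- the identity $(I+\Gamma^{T})B=G$ relating the reflection-defined vectors $\beta_r$ to $(A-I-\Gamma)^{-1}$, proved by peeling off $\wt{s}_{\gamma_1}$ --- and that argument is sound.
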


\begin{proof}
	The process $y$ is GRBM in a $q$-dimensional orthant, driven by the process $\eta$ given by
	\begin{align*}
		\eta(t) &= \begin{pmatrix}
		\gamma_1^T\\ \vdots\\ \gamma_q^T
	\end{pmatrix}\
	\beta(t)
		\end{align*}
with $\beta$ a standard Brownian motion. The covariance matrix of $\eta$ is therefore given by $A$, while the reflection matrix is $\Gamma$. It is now straightforward to verify that the modified skew-symmetry condition holds. An application of Corollary~\ref{Thm:Orthant} completes the proof.
	\end{proof}

% More generally, for any $U$ that satisfies Assumption~\ref{Assumption:U} and $\alpha_1,\ldots,\alpha_{m}\in\R^n$ such that $\abs{\alpha_j}^2=1$ the solution to (\ref{Eq:ExSDEs}) is an example of GRBM and we get an invariant measure:
% 
% \begin{prop}
% 
% 	The $m$-dimensional diffusion $y$ has an invariant measure with density
% 	\begin{align*}
% 		%%%need to check this with maple -- cf other bookmark
% 		&\exp\left\{2\sum_{j=1}^m U\left(\frac{x_j}{\sqrt{2}}\right) - 2 \theta\left(\mu\right)\cdot x \right\}
% 		\intertext{assuming $\theta_r(\mu)=-\beta_r\cdot\mu>0$ for all $r$, where}
% 		\beta_k & = \begin{cases}
% 			\gamma_1 & \text{if } k=1\\
% 			s_{\gamma_1}\cdots s_{\gamma_{k-1}}(\gamma_k)\quad & \text{if } k>1.
% 		\end{cases}
% 	\end{align*} 
% in product form. The marginals are of the form $-\log(\gamma_{\theta_k}/2)$ where the $\gamma_{\theta_k}$ are gamma random variables with parameters $\theta_k$. The vector $\theta$ is given by $\theta=\theta(\mu)= \inv{\sqrt{2}}\, C\mu$ where,
% 	\begin{align*}
% C_{jk} & = \begin{cases}
% 	0 & \text{if }j>k\\
% 	1 & \text{if }j=k\\
% 	-\alpha_j\cdot\alpha_{j+1} &\text{if } k=j+1\\
% 	-s_{\alpha_{k-1}}\ldots s_{\alpha_{j+1}}\left(\alpha_j\right)\cdot	\alpha_k\quad & \text{otherwise}
% \end{cases}
% 	\end{align*}
%\end{prop}

Even in one dimension, Proposition~\ref{Prop:PitmanQueue} yields non-trivial and interesting examples. For example, take $U(x)=-e^{-x}$ and choose $\gamma_j=(-1)^{j+1}\,\sqrt{2}$. The corresponding system of SDEs for the $y_j$ is given by
\begin{align}
	\label{Eq:PitmanQueueOne}
	\d y_1(t) & = \sqrt{2}\,\d B(t) - e^{-y_1(t)}\,\d t\\
	\d y_k(t) & = (-1)^{k+1}\sqrt{2} \,\d B(t) +\left[2 \sum_{j=1}^{k-1} (-1)^{j+k}e^{-y_j(t)} - e^{-y_1(t)}\right]\,\d t.
\end{align}
A realisation of this sequence of one-dimensional processes is given by the following construction. Define the operator $\S$ on continuous real-valued paths on by
\begin{align*}
	\left(\S\eta\right)(t) & = \eta(t) - 2\log\int_0^t e^{-\eta(s)}\,\d s.
	\intertext{Then we have}
	\left(\S B\right)(t) & = B(t)-2 J_1(t) + 2 J_1(0)
	\intertext{where $J_1$ is the exponential analogue of th future infimum in the Pitman theorem, i.e.}
	J_1(t) & = -\log\int_t^\infty e^{-B(u)}\,\d u	.
	\intertext{If we define inductively sequences $\left(Y_n\colon n\in\N\right)$ and $\left(J_n\colon n\in\N\right)$ of processes taking values in $\R$ by $Y_1(t)=\S B(t) + J_1(t)$ and further}
	J_{n+1}(t) &= -\log \int_t^\infty e^{-\S^n B(u)}\,\d u\\
	Y_{n+1}(t) &= \S^{n+1} B(t) + J_{n+1}(t)
\end{align*}
then $Y_1,Y_2,\ldots$ satisfies the system of SDEs (\ref{Eq:PitmanQueueOne}). Thus, $J_1(0), J_2(0), J_3(0), \ldots $ is an i.i.d sequence of random variables and $\S$ acts on it by left shift. 
In particular, if the Brownian motion path $\left(B(t),\ t\geq 0\right)$ could be recovered almost
surely from the sequence $J_1(0), J_2(0), J_3(0), \ldots $, then the system would be metrically isomorphic to a Bernoulli shift.  We leave this as an open question.  A discrete version of this open question was answered in the affirmative in \cite{KeaneOConnell08}.

\subsection{Generalised Pitman transform for other functions}

The fact that invariant measures in product form appear irrespective of the choice of potential suggests generalising the exponential analogue of the Pitman transform to other functions than the exponential, as follows.

Suppose that $U\colon\R\map \R$ is such that, for smooth $\eta\colon (0,\infty)\map\R$ the differential equation
\begin{align*}
	\d\left(\uind{T}{U}(\eta)\right)(t) & = \d \eta(t) + U'\left(\uind{T}{U}\left(\eta\right)(t)\right)\,\d t\\
	\intertext{has a unique solution with $\uind{T}{U}(0+)=-\infty$. This is true
	for $U(x)=-e^{-x}$, and for this choice of U we have $\uind{T}{U}=T_{\sqrt{2}}$, where $T_\alpha$ is the generalised Pitman transform as defined in (\ref{Eq:DefGPT}). This is extended to higher dimensions by noting that the (exponential) generalised Pitman transfrom $T_\alpha$ only acts on the linear span of $\alpha$ and leaves the orthogonal space $\alpha^\perp$ invariant. So we can define the generalised Pitman transform as follows. Write $\Pi_\alpha$, $\Pi_{\alpha^\perp}$ for the orthogonal projections from $\R^d$ to the span and orthogonal complemement of $\alpha$ respectively and define $T_\alpha^{(U)}$ by}
	T_\alpha^{(U)}\eta(t) & = \Pi_{\alpha^\perp}\eta(t) + T^{(U)}\left(\Pi_{\alpha}\eta(t)\right)\,\alpha.
\end{align*}
It could be interesting to study the algebraic properties of these operators, as in \cite{BBO_Crystal, BBO_Littelman}.

\subsection{RBM in a Weyl Chamber}

Reflected Brownian motion in the \emph{Weyl chamber}
 \begin{align*}
 	\Omega &= \left\{x\in\R^d\colon x_1>x_2>\ldots > x_d \right\},
\end{align*}
with normal reflection is a realisation of \emph{Brownian motion with rank-dependent drift}, as studied for example
by \ts{Pal--Pitman} \cite{PalPitman08}. This is defined by taking $d$ standard Brownian motions $X_1,\ldots,X_d$ with increasing re-ordering $X_{(1)},\ldots,X_{(d)}$ and drift $\mu$ with $-\mu\in\Omega$ such that at each time $t$ the process $X_{(j)}$ has drift $\mu_j$. It can also be viewed as a Doob transform of the \emph{Delta-Bose gas}, see \ts{Prolhac--Spohn} \cite{ProlhacSpohn11}. 
The corresponding GRBM has in fact been studied before, by \ts{Rost--Vares} \cite{RostVares85}.
Since GRBM with normal reflection is a gradient diffusion it follows that this process has an invariant measure in product form 
as in Theorem~\ref{Thm:MainGeneralDomain} (see Remark~\ref{Rmk:GradientDiff}).

\section{Proofs}
\label{Sec:Proofs}

In this final section we prove Theorems \ref{Thm:Analytic} and \ref{Thm:AnalyticOrthant} and Proposition~\ref{prop:AssumptionU}. We first prove the purely analytic fact that the analogue of the basic adjoint relations hold in both the general domain (Theorem \ref{Thm:Analytic}) and the orthant (Theorem \ref{Thm:AnalyticOrthant}) and then establish, by proving Proposition~\ref{prop:AssumptionU}, a sufficient condition for $U$ to be regular with respect to a given input data $(N,Q,\mu,A)$.

\subsection{Proof of Theorem~\ref{Thm:Analytic}}

We wish to show that $\g^*p=0$ where $\g^*$ is the formal adjoint of the generator $\g$ and $p$ is as in (\ref{Eq:IMHW}) and (\ref{Eq:DensityOrthant}) respectively. Recall from the comments after (\ref{Eq:DefGenerator}) that the formal adjoint of \g\ is given by
\begin{align*}
	\g^* & = \inv{2}\,\Delta - \Om\cdot\nabla - \nabla\cdot\Om
	\intertext{where the function $\Om\colon\R^d\map\R^d$ is given by}
		\Om( x) & = \sum_{r=1}^k U'\left(n_r\cdot x- b_r\right)v_r - \mu.
	\intertext{Note further that $p$ has the form $p(x)=\exp\left\{W(x)\right\}$ where}
	W(x) & = 2\left[\sum_{r=1}^k U\left(n_r\cdot x-b_r\right) -\gamma\cdot x\right].
	\intertext{Let us remark that}
	\Om(x) & = \inv{2}\,\nabla W(x) + \left(\gamma-\mu\right)+\sum_{j=1}^k U'\left(n_r\cdot x - b_r\right) q_r.
\end{align*}
Because $q_j\cdot n_j=0$ for all $j$ we have $\nabla\cdot\Om = \inv{2}\,\Delta W$. Further $\nabla p = p\nabla W$ and therefore $\Delta p = \left(\Delta W +\abs{\nabla W}^2\right)p$. Hence it follows that
\begin{align*}
	\g^*p &= \inv{2}\Delta p - \left[\inv{2}\,\nabla W + \left(\gamma-\mu\right)+\sum_{j=1}^k U'\left(n_r\cdot x - b_r\right) q_r \right]\cdot\nabla p - \inv{2}\,W p\\
	 & = \inv{2}\,\left[\Delta W + \inv{2}\,\abs{\nabla W}^2 \right] p -\inv{2}\,\abs{\nabla W}^2 p + \left(\gamma-\mu\right) \cdot\nabla W p\\
	 &\quad\quad - \sum_{r=1}^k U'\left(n_r\cdot x- b_r\right) q_r\cdot \nabla  p - \inv{2}\,\Delta W  p.
\end{align*}
Hence,
\begin{align*}
	 \frac{\g^*p(x)}{p(x)} & = \left(\mu-\gamma\right)\cdot\nabla W(x) - \sum_{r=1}^k U\left(n_r\cdot x- b_r\right) q_r \cdot \nabla W(x)\\
		& = 2\sum_{j=1}^k U'\left(n_j\cdot x- b_j \right) \left(\mu-\gamma\right)\cdot n_j - 2 \left(\mu-\gamma\right)\cdot\gamma \\
		&\quad\quad- \sum_{r,s} U'\left(n_r\cdot x-b_r\right) U'\left(n_s\cdot x-b_s\right) n_r\cdot q_s - 2 \sum_{r=1}^k U'\left(n_r\cdot x-b_r\right) \gamma\cdot q_s\\
		& = 2\sum_{r=1}^k U'\left(n_r\cdot x-b_r\right) \left[N\mu-(N-Q)\gamma \right]_r.
\end{align*}
where we have used the skew-symmetry condition and $[\bm{y}]_r$ denotes the $r^\text{th}$ entry of a vector $\bm{y}$. The fact that the last line is equal to zero follows from the fact that $\overline{N}\mu=(\overline{N}-\overline{Q})$ for \emph{any} choice of invertible submatrix $\overline{N}$ of $N$ (and corresponding submatrix $\overline{Q}$ of $Q$) and that each row of $N$ must occur in at least one invertible submatrix, because of Lemma~\ref{Thm:LinAlg} below.

\par\noindent In the proof above we have used the following simple lemma from linear algebra.

\begin{lem}
	\label{Thm:LinAlg}
	Let $k>d$ and $\mathcal{E}=\{y_1,\ldots,y_k\}\subset\R^d$ a set of vectors whose span is the whole of $\R^d$. If there exists $r\in\ul{k}$ such that any collection of $d$ elements of $\mathcal{E}$ containing $y_r$ is linearly dependent then $y_r=0$.
	\begin{proof}
		The assumption that $\mathcal{E}$ spans $\R^d$ is equivalent to $\mathcal{E}$ containing a basis. By re-ordering we may therefore assume that $\{y_1,\ldots,y_d\}$ is such a basis. If now the set $\{y_1,\ldots,\wh{y}_j,\ldots,y_{d+1}\}$ (meaning the vector $y_r$ is left out) is linearly dependent for $r\in\ul{d}$ then $y_{d+1}$ is in the span of $\{y_1,\ldots,\widehat{y}_r,\ldots y_d\}$. By assumption this is true for \emph{every} $r\in\ul{d}$, so 
		\begin{align*}
			y_{d+1}\in\bigcap_{r=1}^d \text{span}\left\{y_1,\ldots,\widehat{y}_r,\ldots,y_d\right\}=\{0\}
		\end{align*}
		which completes the proof.
	\end{proof}
\end{lem}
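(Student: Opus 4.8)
The plan is to prove the contrapositive: assuming $y_r \neq 0$, I would exhibit a collection of exactly $d$ vectors from $\mathcal{E}$ that contains $y_r$ and is linearly \emph{independent}, which directly contradicts the hypothesis. The statement of the lemma then follows at once.

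First I would observe that, since $y_r \neq 0$, the singleton $\{y_r\}$ is a linearly independent subset of $\R^d$. The essential structural input is that $\mathcal{E}$ spans $\R^d$ by assumption. I would then invoke the Steinitz exchange lemma in its basis-extension form: any linearly independent family can be enlarged to a basis of $\R^d$ by adjoining vectors drawn from any prescribed spanning set. Applying this with the independent family $\{y_r\}$ and the spanning set $\mathcal{E}$, I obtain indices $i_1,\ldots,i_{d-1}$ with $y_{i_1},\ldots,y_{i_{d-1}} \in \mathcal{E}$ such that $\{y_r, y_{i_1},\ldots,y_{i_{d-1}}\}$ is a basis of $\R^d$. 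This is a collection of $d$ elements of $\mathcal{E}$, one of which is $y_r$, and it is linearly independent by construction, contradicting the assumption that every $d$-element subcollection containing $y_r$ is linearly dependent.

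I expect no serious obstacle here, since the lemma reduces to a single application of a standard result. The only point requiring minor care is to ensure that the augmenting vectors stay \emph{inside} $\mathcal{E}$ rather than being arbitrary elements of $\R^d$; this is precisely what the exchange-lemma form of basis extension guarantees (as opposed to the weaker assertion that every independent set extends to \emph{some} basis), which is why I would state it in that form. A second point, essentially automatic, is that the $d$ vectors are distinct, which holds because they form a basis. An alternative, more computational route would be to reorder so that $\{y_1,\ldots,y_d\}$ is a basis and then run a direct dependency/spanning argument on the augmented family $\{y_1,\ldots,y_{d+1}\}$; this works as well but is more bookkeeping-heavy, so I would favour the exchange-lemma argument for its brevity and transparency.
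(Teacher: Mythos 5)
Your proof is correct, but it runs along a genuinely different track from the paper's. You argue by contrapositive: if $y_r\neq 0$ then $\{y_r\}$ is linearly independent, and the Steinitz exchange lemma lets you complete it to a basis of $\R^d$ using only vectors from the spanning set $\mathcal{E}$, producing a linearly independent $d$-element subcollection containing $y_r$ --- contradicting the hypothesis. The paper instead argues directly: it extracts a basis $\{y_1,\ldots,y_d\}\subseteq\mathcal{E}$ (note $y_r$ cannot be one of these basis vectors, or the hypothesis is already violated, so one may take $y_r=y_{d+1}$ after reordering), observes that for each $j$ the $d$-element set obtained by swapping $y_j$ for $y_r$ must be dependent, hence $y_r\in\mathrm{span}\{y_1,\ldots,\widehat{y}_j,\ldots,y_d\}$ for every $j$, and concludes $y_r=0$ since the intersection of these $d$ hyperplanes is trivial. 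This is exactly the ``more computational route'' you mention and set aside at the end. Your version is shorter and delegates the work to a named standard theorem; the paper's is self-contained and elementary, at the cost of slightly more bookkeeping (and, as written, some notational confusion between the indices $j$ and $r$). You are also right to flag that the exchange form of basis extension --- augmenting from within the prescribed spanning set, not from arbitrary vectors --- is the precise statement needed; that is the one point where a careless citation of ``every independent set extends to a basis'' would not suffice.
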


\subsection{Proof of Theorem \ref{Thm:AnalyticOrthant}}

Next we turn to the proof of Theorem \ref{Thm:AnalyticOrthant}, establishing the analogue of the basic adjoint relations for GRBM in an orthant. The generator of this process is now of the form 
\begin{align*}
	\g & = \inv{2}\, \nabla\cdot A\nabla + \Om\cdot\nabla
	\intertext{where $\Om$ is given by}
			\Om(x) & = \sum_{j=1}^d U'\left(n_j\cdot x\right)\left(e_j+q_j\right) - \mu.
			\intertext{The formal adjoint of $\g$ is given by $\g^*=\inv{2}\,\nabla\cdot A\nabla-\Om\cdot\nabla-\nabla\dot\Om$, and we need to prove that $\g^*p=0$ where $p(x)=\exp\left\{W(x)\right\}$,}
			W(x) & = 2\left(\sum_{j=1}^d U\left(x_j\right) - \delta(\mu)\cdot x \right).
			\intertext{and $\delta(\mu)=\left(2A-I-Q\right)^{-1}\mu$. Noting that}
			\Om(x) & = \inv{2}\,\nabla W(x) +\sum_{j=1}^d U'\left(x_j\right) q_j + \delta(\mu)-\mu
	\end{align*}
and hence $\nabla\cdot\Omega = \inv{2}\,\Delta W$ it follows that
\begin{align*}
	\g^*p & = \inv{2}\,\left(\nabla\cdot A\nabla + \nabla W\cdot A\nabla W \right) p - \left[\left(\inv{2}\,\nabla W + \sum_{j=1}^d U'\left(n_j\cdot x\right) q_j\right)\cdot \nabla W \right]p\\
	 & \quad\quad + \left(\mu-\delta(\mu)\right)\cdot \nabla W p - \frac{p}{2}\, \Delta W. 
		\intertext{Dividing by $p$ and then using $\nabla W(x)=2\left(\sum_j U'\left(n_j\cdot x\right) n_j - \delta(\mu)\right)$ we obtain}
		\frac{\g^*p(x)}{p(x)} & = \inv{2}\,\nabla\cdot\left(A-I\right)\nabla W(x)+\inv{2}\,\nabla W(x)\cdot\nabla\left(A-I\right)\nabla W(x)\\
		& \quad\quad - \sum_{j=1}^d U'\left(x_j\right) q_j \cdot\nabla W(x) + \left(\mu-\delta(\mu)\right)\cdot\nabla W(x)\\
		& = \frac{1}{2}\,\sum_{j=1}^d \left[U''\left(x_j\right)\left(A-I\right)_{jj}\right] + 2\,\sum_{j,r} U'\left(x_j\right) U'\left(x_r\right) \left(A-I\right)_{jr}\\
			&\quad - 2\,\sum_{j=1}^d U'\left(x_j\right) \left[\delta(\mu)\cdot\left(A-I\right)e_j+e_j\cdot\left(A-I\right)\delta(\mu)\right] + 2\delta(\mu)\cdot\left(A-I\right)\delta(\mu)\\
		&\quad + 2\,\sum_{j=1}^d U'\left(x_j\right) \left[q_j\cdot\delta(\mu) + \left(\mu_j-\delta(\mu)_j\right) \right]\\
		&\quad - 2\,\sum_{j,r}U'\left(x_j\right) U'\left(x_r\right) q_{jr} +2\delta(\mu)\cdot\left(\delta(\mu)-\mu\right)
		\intertext{The first term vanishes because the diagonal entries of $A$ are all equal to 1. Using the fact that $A$ is symmetric we have}
		\frac{\g^*p(x)}{p(x)}  & = 2\sum_{j,r} U'\left(x_j\right) U'\left(x_r\right)\left[ \left(A-I\right)_{jr}-q_{jr}\right] \\
		& \quad + 2\,\sum_{j=1}^d U'\left(x_j\right) \left[q_j\cdot \delta(\mu)+\left(\mu_j-\delta(\mu)_j\right) - 2 e_j\cdot\left(A-I\right)\delta(\mu)\right]\\
		&\quad + 2\delta(\mu)\cdot\left(A-I\right)\delta(\mu) + 2\delta(\mu)\cdot\left(\delta(\mu)-\mu\right) 
\end{align*}
The first line equals zero because of the skew-symmetry condition, whereas the other two lines vanish due to the definition of $\delta(\mu)$. 

This completes the proof of Corollary~\ref{Thm:Orthant}.{\hfill\qedsymbol}

\subsection{Proof of Proposition~\ref{prop:AssumptionU}}

Finally we prove the sufficient condition for $U$ to be regular with respect to $(N,Q,\mu,A)$ stated in Proposition~\ref{prop:AssumptionU}. 

By Section 4.3 in \ts{McKean} \cite{McKean} there exists a diffusion with that generator, up to a blow-up time $\tau$, and up to $\tau$ the transition function of the diffusion has a smooth density. Assume that we have a $C^2$-function $V\colon \R^d\map\R$ such that
\begin{enumerate}[a)]
\item $V$ is proper, i.e. there exist functions $a,b\colon \R\map\R$, strictly increasing to infinity and such that $a(0)=b(0)=0$ and
\begin{align*} 
a\left(\abs{x}\right)\leq V(x) \leq b\left(\abs{x}\right)\quad\forall\, x\in\R^d
\end{align*} 
\item there exist $c>0$ and $\epsilon\geq 0$ such that $\g V \leq cV + \epsilon$ outside some compact set
\end{enumerate}
\par\noindent By Theorem 1 in \ts{Thygesen} \cite{Thygesen97} (see also \ts{Gard} \cite{Gard}, p.131), this guarantees the existence of a unique diffusion $X$ with generator $\g$, which is a \emph{regular process}, i.e. there is no blow-up in finite time, or $\tau=\infty$ almost surely. In particular the corresponding martingale problem is well-posed. Suppose now that $\rho\colon\R^d\map [0,\infty)$ is a smooth integrable function with $\g^*\rho=0$. By Proposition 9.2 of \ts{Ethier--Kurtz} \cite{EthierKurtz} it is enough to show that
\begin{align*}
	\int \left(\g f\right)(x)\,p(x)\,\d x =0
\end{align*}
for all $f\in \c_c^2(\R^d;\R)$. But this follows from $\g^*p=0$ because of compact support and integration by parts.
% Because the transition densities $\uind{p}{U}_t(\cdot,\cdot)$ are smooth, for $f\in\c_c^\infty(\R^d;\R)$, the function
% \begin{align*}
% 	\E_x f(X_t) =: P_tf(x) &= \int_{\R^d} f(y) \uind{p}{U}_t(x,y)\,\d y
% \intertext{is itself smooth in $x$. Therefore, if $m\colon\R^d\map [0,\infty)$ is a distribution function with the property that $\g^*m=0$, then for any $f\in\c_b^\infty(\R^d;\R)$,}
% 	\int_{\R^d} \left(P_tf(x) - f(x)\right) m(x)\,\d x & = \int_{\R^d} \int_0^t \g_u\left(P_sf\right)(x) m(x)\,\d x\\
% 	 & = \int_0^t \int_{\R^d} \left(P_sf\right)(x)\g^*_Um(x)\,\d x = 0.
% \end{align*}
So the result holds as soon as we have a Lyapunov function $V$ satisfying a) and b) above. We claim that the function $V\colon\R^d\map [0,\infty)$ defined by
\begin{align*}
	V(x) & = \sum_{r=1}^k \left[\theta_{-b_r}\,n_r\cdot x - \big(U\left(n_r\cdot x - b_r\right)-U\left(-b_r\right)\big)\right]
\end{align*}
does the trick. Recall that we are only considering the cases where $A=I$ or $N=I$. Suppose the former holds. We first verify b): using the skew-symmetry condition, the fact that the $n_j$ are unit vectors and $n_j\cdot q_j=0$,
\begin{align*}
	\g V(x) & = -\inv{2}\, \sum_{r=1}^k U''\left(n_r\cdot x - b_r\right) + \sum_{r=1}^k U'\left(n_r\cdot x - b_r\right) \left[\sum_{s=1}^k \theta_{-b_s}+\mu\cdot n_r\right] \\
	&\quad\quad - \abs{\sum_{r=1}^k U'\left(n_r\cdot x-b_r\right)n_r}^2 - \sum_{r=1}^k \theta_{-b_r}\mu\cdot n_r \\
	& \leq \alpha\,\sum_{r=1}^k \left[\theta_{-b_r} \left(n_r\cdot x-b_r\right) - U\left(n_r\cdot x-b_r\right) \right] - \sum_{r=1}^k \theta_{-b_r}\mu\cdot n_r \\
	& \leq \alpha V(x) +\epsilon
\end{align*}
for some constants $\alpha, \epsilon>0$, so that b) holds. To see that $V$ is proper define $a,b\colon[0,\infty)\map\R$ by
\begin{align*}
	a(\rho) &= \inf_{\abs{x}\geq\rho} V(x),\quad\quad b(\rho) = \sup_{\abs{x}\leq \rho} V(x).
\end{align*}
Clearly the only point to prove is that $a(\rho)\longrightarrow\infty$ as $\rho\longrightarrow\infty$, but this follows from the fact that the $n_r$ form a basis and $y-U(y)\longrightarrow\infty$ as $\abs{y}\to\infty$. So a) holds which completes the proof for $A=I$.

Suppose now that $k=d$ and $N=I$. Recall that the covariance matrix $A$ of the driving Brownian motion has diagonal entries all equal to $\alpha>0$. By scaling we may assume without loss of generality that $\alpha=1$. We now define a function $V$ by
\begin{align*}
	V(x) & = \sum_{r=1}^d \left[\theta_0\, x_r-\big(U\left(x_r\right)+U(0)\big)\right].
	\intertext{So we have}
	\g V(x) & = -\frac{1}{2}\, \sum_{j=1}^d U''\left(x_j\right) + \sum_{j=1}^d \gamma_j U'\left(x_j\right) \\
	&\quad\quad - \sum_{r,j} U'\left(x_r\right) a_{rj} U'\left(x_r\right) - \sum_{j=1}^d \mu_j\theta_0\\
	& \leq \sum_{j=1}^d \left[\gamma_j U'\left(x_j\right) -\frac{1}{2}\, U''\left(x_j\right) \right] - \sum_{j=1}^d \mu_j\theta_0
\end{align*}
because $A$ is a covariance matrix, hence non-negative definite. The same argument as for the case of $A=I$ now shows that $V$ satisfies a) and b).

\ \\

\ \\\ \\ \noindent \textsc{Mathematics Institute, University of Warwick, Coventry CV4 7AL, UK}\\\ \\ \textit{Email addresses} \texttt{n.m.o'connell@warwick.ac.uk} and \texttt{j.ortmann@warwick.ac.uk}

\end{document}